\documentclass[11pt, reqno]{amsart}
\usepackage{indentfirst, amssymb, amsmath, amsthm, mathrsfs, setspace, indentfirst, enumerate,  mathrsfs, amsmath, amsthm, MnSymbol}
\usepackage[bookmarksnumbered, colorlinks, plainpages]{hyperref}
\usepackage{mathrsfs}
\usepackage{tikz}
\usetikzlibrary{positioning,calc}
\usepackage{pgfplots}
\usepackage{tabularx, colortbl, xcolor}
\usepackage{array, enumitem}
\pgfplotsset{compat=1.18}
\usetikzlibrary{3d}
\usepackage{tikz-3dplot}
\tdplotsetmaincoords{70}{120}
\newtheorem*{theoA}{Theorem A}
\newtheorem*{theoB}{Theorem B}

\newtheorem*{exmA}{Example A}

\newtheorem*{cor A}{Corollary A}
\newtheorem*{cor B}{Corollary B}

\newtheorem{theo}{Theorem}[section]
\newtheorem{lem}{Lemma}[section]

\newtheorem{exm}{Example}[section]

\newtheorem{rem}{Remark}[section]
\newcommand{\ol}{\overline}
\newcommand{\be}{\begin{equation}}
\newcommand{\ee}{\end{equation}}
\newcommand{\beas}{\begin{eqnarray*}}
\newcommand{\eeas}{\end{eqnarray*}}
\newcommand{\bea}{\begin{eqnarray}}
\newcommand{\eea}{\end{eqnarray}}

\numberwithin{equation}{section}
\begin{document}
\title[R\MakeLowercase{igidity of Entire Functions Sharing a Finite Set with Their Partial Derivatives in} $\mathbb{C}^n$]{\LARGE R\LARGE\MakeLowercase{igidity of Entire Functions Sharing a Finite Set with Their Partial Derivatives in} $\mathbb{C}^n$}
\date{}
\author[S. M\MakeLowercase{ajumder}, A. B\MakeLowercase{anerjee and} S. P\MakeLowercase{anja}]{S\MakeLowercase{ujoy} M\MakeLowercase{ajumder}, A\MakeLowercase{bhijit} B\MakeLowercase{anerjee and} S\MakeLowercase{hantanu} P\MakeLowercase{anja$^*$}}
\address{Department of Mathematics, Raiganj University, Raiganj, West Bengal-733134, India.}
\email{sm05math@gmail.com, sjm@raiganjuniversity.ac.in}

\address{ Department of Mathematics, University of Kalyani, West Bengal 741235, India.}
\email{abanerjee\_kal@yahoo.co.in, abanerjeekal@gmail.com}
	
\address{Department of Mathematics, University of Kalyani, West Bengal 741235, India.}
\email{panjasantu07@gmail.com}

\renewcommand{\thefootnote}{}
\footnote{2020 \emph{Mathematics Subject Classification}: 32A19, 32A22 and 32H30}
\footnote{\emph{Key words and phrases}: Holomorphic function in several complex variables, spherical metric, partial derivative, normal families, set sharing.}
\footnote{*\emph{Corresponding Author}: Shantanu Panja.}
\renewcommand{\thefootnote}{\arabic{footnote}}
\setcounter{footnote}{0}

		\begin{abstract}
			This paper investigates certain classes of entire functions in $\mathbb{C}^n$ that, together with their partial derivatives, share a finite set consisting of three elements. By employing normality criteria, we study the behaviour of such functions and derive the necessary conditions governing their existence. Our results extend those of \cite{Chang-Fang-Zalcman-2007}, originally established for functions of a single complex variable, to the setting of several complex variables, thereby providing a comprehensive generalization of the earlier result in a direction not previously explored.
		\end{abstract}
	
\thanks{Typeset by \AmS -\LaTeX}
\maketitle

\section{{\bf Introduction and main result}}

The theory of value sharing between a meromorphic function and its derivatives occupies a central position in modern complex analysis, particularly within the framework of uniqueness theory. Over the past decades, a considerable body of research has been devoted to understanding how the distribution of shared values constrains the analytic structure of entire and meromorphic functions. A remarkable direction in this area concerns identifying minimal sets that enforce strong rigidity, ultimately forcing a function to coincide with its derivative.

In this context, the pioneering work of Fang--Zalcman \cite{Fang-Zalcman-2003} represents a milestone, revealing a striking phenomenon: the sharing of a carefully chosen finite set between an entire function and its derivative imposes a complete functional identity. Their result not only highlights the delicate interplay between value distribution and differentiation but also establishes the sharpness of the underlying conditions.

Before proceeding further, we recall some fundamental notions in several complex variables which will be used throughout the paper.

\medskip

Let $G\neq \varnothing$ be an open subset of $\mathbb{C}^n$. Let $f$ be a holomorphic function in $\mathbb{C}^n$.
Take $a\in G$. Let $G_a$ be the connectivity component of $G$ containing $a$. Assume $f\mid_{G_a}\not\equiv 0$. Then a series
$f(z)=\sum_{\lambda=p}^{\infty}P_{\lambda}(z-a)$ converges on some neighborhood of $a$ and represents $f$ on this neighborhood. Here 
$P_{\lambda}$ is a homogeneous polynomial of degree $\lambda$ and $P_{p}\not\equiv 0$. The polynomials $P_{\lambda}$ depend on $f$ and $a$ only. The number $\mu^0_f(a)=p$ is called the zero multiplicity of $f$ at $a$ (see \cite[pp. 12]{Stoll-1974}).

\medskip
Let $f$ be a meromorphic function on $G$.
Take $a\in G$ and $c\in\mathbb{C}\cup\{\infty\}$. Let $G_a$ be the component of $G$ containing $a$. If $0\equiv f\mid_{G_a}\not\equiv c$, define $\mu^c_f(a)=0$. Assume $0\not\equiv f\mid_{G_a}\not\equiv c$. Then an open connected neighborhood $U$ of $a$ in $G$ and holomorphic functions $g\not\equiv 0$ and $h\not\equiv 0$ exist on $U$ such that $h. f\mid_U=g$ and $\dim g^{-1}(0)\cap h^{-1}(0)\leq n-2$, where $n=\dim(\mathbb{C}^n)$. Therefore the $c$-multiplicity of $f$ is just $\mu^c_f=\mu^0_{g-ch}$ if $c\in\mathbb{C}$ and $\mu^c_f=\mu^0_h$ if $c=\infty$. The function $\mu^c_f:G\to \mathbb{Z}$ is nonnegative and is called the $c$-divisor of $f$ (see \cite[pp. 12]{Stoll-1974}).
If $f\not\equiv 0$ on each component of $G$, then $\nu=\mu_f=\mu^0_f-\mu^{\infty}_f$ is called the divisor of $f$. The function $f$ is holomorphic on $G$ if and only if $\mu_f\geq 0$. We define $\displaystyle \operatorname{supp}\; \nu=\ol{\{z\in G: \nu(z)\neq 0\}}$.

\medskip
Let $f$, $g$ and $a$ be meromorphic functions on $\mathbb{C}^n$. Then one can find three pairs
of entire functions $f_1$ and $f_2$, $g_1$ and $g_2$, and $a_1$ and $a_2$ in which each pair is coprime
at each point in $\mathbb{C}^n$ such that $f = f_2/f_1$, $g=g_2/g_1$ and $a = a_2/a_1$.
We say that $f$ and $g$ share $a$ CM if $\mu_{a_1f_2-a_2f_1}^0=\mu_{a_1g_2-a_2g_1}^0\;(a\not\equiv \infty)$ and $\mu_{f_1}^0=\mu_{g_1}^0\;\;(a=\infty)$. 

For $S\subset\mathbb{C}\cup\{\infty\}$, we write
\[E(S,f)=\bigcup\limits_{a\in S}\left\lbrace \mu^a_f(z): z\in\mathbb{C}^n\right\rbrace.\]
We say that $f$ and $g$ share the set $S$ CM if $E(S,f)=E(S,g)$.

\medskip

With these preliminaries in place, we now turn to the classical results that motivate the present investigation.

In $2003$, Fang--Zalcman \cite{Fang-Zalcman-2003} proved
\begin{theoA}\cite[Theorem 1]{Fang-Zalcman-2003} There exists a set $S$ containing three elements such that if $f$ is a non-constant entire function in $\mathbb{C}$ and $E(S,f)=E\left(S, f^{(1)}\right)$, then $f\equiv f^{(1)}$.
\end{theoA}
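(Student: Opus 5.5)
The set I would use is the one from Fang--Zalcman: for $a\neq 0$, $S=\{0,a,b\}$ with $a,b$ the two distinct roots of $w^2-aw+a^2$ (rescaled as needed; equivalently $S=\{0,\,a(1\pm i\sqrt3)/2\}$-type). The key features I will exploit are that $S$ is not preserved by any nontrivial affine map $w\mapsto \alpha w+\beta$ except the identity, and a suitable normality argument. The overall strategy has two steps. First, prove that any nonconstant entire $f$ with $E(S,f)=E(S,f')$ has bounded derivative growth, hence is of exponential type at most $1$. Second, use Nevanlinna theory to pin down $f$.

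\emph{Step 1 (normality).} Consider the family $\mathcal F=\{f_\zeta(z)=f(z+\zeta):\zeta\in\mathbb C\}$ on the unit disc. Each member shares $S$ CM with its derivative. I would show $\mathcal F$ is normal via the Zalcman--Pang lemma with exponent $-1$: if not, there exist $g_k(\xi)=\rho_k^{-1}f(z_k+\rho_k\xi)\to g$, nonconstant entire of order $\le1$, with $g^\#\le g^\#(0)=1$ (a suitable normalization of $|f'|$). Since $g_k'(\xi)=f'(z_k+\rho_k\xi)$, Hurwitz gives $g'(\xi)\in S \Rightarrow$ limits of points where $f\in S$. Because $f$ values are $\rho_k g_k\to 0$, those points map to $f\approx 0$. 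So the zeros of $g'-a$ and $g'-b$ are zeros of ... (degenerate); one then shows $g'$ omits $a,b$ or attains them only where $g'$ nearly equals $0$, contradiction. This gives $g'$ of order $\le 1$ omitting two values, hence constant, so $g$ is linear with slope in the complement, which contradicts the normalization/shared structure. Normality of $\mathcal F$ gives $f'$ bounded where $f$ is bounded, and in fact $|f'(z)|\le M(|f(z)|+1)$. Hence $f$ has order at most $1$ and type bounded.

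\emph{Step 2 (identification).} With order $\le1$, I would write $f'-0$, $f'-a$, $f'-b$ versus $f-0,f-a,f-b$. Set $P(w)=w(w-a)(w-b)$. CM sharing gives $P(f')/P(f)=e^{\alpha z+\beta}$ by Hadamard. Comparing Nevanlinna characteristics via the lemma of the logarithmic derivative, $T(r,f')\le T(r,f)+S(r,f)$, and the second main theorem applied to $f'$ with the three values yields $T(r,f)\sim T(r,f')$. Then I would analyze the function $\phi=(f'-f)$ and show $\phi\equiv0$. Assume not; the auxiliary function $\frac{f''}{f'}\cdot\frac{P(f)}{P(f')}$-type combination together with the exponential relation forces an algebraic identity between $f$ and $f'$ of degree $3$. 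Expanding at points where $f=0$ (then $f'\in S$), one finds that the Taylor coefficients produce a polynomial relation which by the specific choice of $a,b$ has only the solution $f'=f$ (the affine-rigidity of $S$ enters here).

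\emph{Main obstacle.} The delicate step is Step 1: in the rescaling limit, sharing between $f$ and $f'$ degenerates because $\rho_k\to0$ kills the values of $f$, so one must carefully extract that $g'$ omits the two nonzero elements of $S$. I would handle this by choosing $\rho_k$ so that $f(z_k)$ remains comparable and using the CM multiplicity condition to exclude simple tangencies, possibly needing a separate case when $f(z_k)\to$ a point of $S$.
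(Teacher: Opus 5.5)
Measured against the paper's argument, your proposal has a genuine gap, mainly in Step~2; Step~1 as written does not close either. The paper does not reprove Theorem A, but its proof of Theorem~\ref{t1.1} with $n=1$ and $S=\{0,a,b\}$ is a complete argument, and that is what I compare against.

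\textbf{Step 1 and the choice of $S$.} The description ``$a,b$ roots of $w^2-aw+a^2$'' is self-referential and forces $a=0$. You mean $S=\{0,\alpha e^{i\pi/3},\alpha e^{-i\pi/3}\}$, and this set does satisfy $a^2\neq b^2$, $a\neq 2b$, $2a\neq b$, $a^2-ab+b^2\neq 0$. With your rescaling $g_k=\rho_k^{-1}f(z_k+\rho_k\xi)$, Hurwitz only gives $g'\in S\Rightarrow g=0$ and $g=0\Rightarrow g'\in S$. Nothing in your sketch makes $g'$ omit $a$ and $b$, and a point where $g'=a$ is not one where ``$g'$ nearly equals $0$''.

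Lemma~\ref{ln2.7} instead uses the unscaled limit $F_j(\zeta)=f(z_j+\rho_j\zeta)\to F$. At every $s$-point of $F$ ($s\in S$), Hurwitz together with $|f'|\le\max_{s\in S}|s|$ on $f^{-1}(S)$ gives $F'=\lim\rho_jf'(\cdot)=0$. So $F$ has three totally ramified finite values, which the second main theorem forbids, and no special case for $f(z_k)$ near $S$ arises.

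Your rescaling can be saved, but only through CM. Since $0\in S$, $f$ and $f'$ have no common zero, because the multiplicities $\mu$ and $\mu-1$ would have to agree. Hence $g'$ is zero-free and $g'=e^{c\xi+d}$. If $c\neq0$, both the $a$- and $b$-points of $g'$ would lie at zeros of $g=\tfrac1c e^{c\xi+d}+D'$, forcing $a=b$; the linear case is excluded by the Pang--Zalcman normalization $g^\#(0)=A+1$. Also, normality gives $|f'|\le M(1+|f|^2)$, not $M(1+|f|)$; order $\le 1$ then comes from Clunie--Hayman (Lemma~\ref{ln2.3}).

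\textbf{Step 2 is the decisive gap.} You never show that the exponent in $P(f')/P(f)=e^{\alpha z+\beta}$ vanishes, and you never reduce $f$ to a closed form, so your rigidity has nothing to act on. Taylor expansions at zeros of $f$ only give the local fact $f'\in S$ there, and $\phi=f'-f$ plays no role. Three steps are missing.

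(a) Since $0\in S$, $P(0)=0$, so $P(f')/P(f)$ is a linear combination of products of $f'/f$, $f'/(f-a)$, $f'/(f-b)$; this is the partial-fraction identity in Case~1 of Lemma~\ref{ln2.9}. For finite order the logarithmic derivative lemma gives $T(r,e^{\alpha z+\beta})=O(\log r)$, hence $P(f')\equiv CP(f)$ with $C\neq 0$ constant. This is exactly where $0\in S$ is used; for $abc\neq0$ it fails, which is why the paper defers to Chang--Fang--Zalcman there.

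(b) Differentiating gives $P'(f')f''=CP'(f)f'$. Since $P'(0)=ab\neq0$, at a zero of $f'$ of order $m$ the left side vanishes to order exactly $m-1$ and the right side to order at least $m$. So $f'$ is zero-free, $f'=C_1e^{\lambda z}$, and, the linear case being trivially impossible, $f=\tfrac{C_1}{\lambda}e^{\lambda z}+D$. This is Case~2, Sub-case~2.2 of the proof of Theorem~\ref{t1.1}, followed by Lemma~\ref{ln2.10}.

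(c) Only now does your rigidity enter, and then it works exactly as you hope. We have $f'=T(f)$ with $T(w)=\lambda(w-D)$, and $f$ omits only $D$, so CM sharing says $T$ maps $S\setminus\{D\}$ onto $S\setminus\{0\}$. Counting forces $D\in S$, so $T(S)=S$, hence $T=\mathrm{id}$, i.e.\ $\lambda=1$, $D=0$ and $f\equiv f'$. The sets $\{0,a,b\}$ with a nontrivial affine symmetry are precisely those with $a=-b$, $a=2b$, $b=2a$ or $a^2-ab+b^2=0$. This is what the coefficient comparison \eqref{M.10}--\eqref{M.13} detects.
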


More specifically, Fang--Zalcman \cite{Fang-Zalcman-2003} showed that this holds if $S=\{0, a, b\}$, where $a$ and $b$ are distinct nonzero complex numbers such that $a^2 \neq b^2$, $a\neq 2b$, $2a\neq b$, and $a^2-ab+b^2\neq 0$. 

\smallskip
In the same paper, Fang--Zalcman \cite{Fang-Zalcman-2003} exhibited the following to show that the set contain three elements in Theorem A is best possible.
\begin{exmA} \cite{Fang-Zalcman-2003}
	Let $S=\{a,b\}$, where $a$ and $b$ are any two distinct complex numbers. Let $f(z)=e^{-z}+a+b$. Then $f^{(1)}(z)=-e^{-z}$. Obviously, $E(S, f)=E\left(S, f^{(1)}\right)$, but $f\not\equiv f^{(1)}$.
\end{exmA}

In $2007$, Chang et al. \cite{Chang-Fang-Zalcman-2007} extended Theorem A to an arbitrary set having three elements. We now recall their result.
\begin{theoB}\cite[Theorem 1]{Chang-Fang-Zalcman-2007} Let $f$ be a non-constant entire function in $\mathbb{C}$ and let $S=\{a, b, c\}$, where $a$, $b$ and $c$ are distinct complex numbers. If $E(S,f)=E\left(S, f^{(1)}\right)$, then either
	\begin{enumerate}
		\item[\emph{(i)}] $f(z)=Ce^z$ or
		\item[\emph{(ii)}] $f(z)=Ce^{-z}+\frac{2}{3}(a+b+c)$ and $(2a-b-c)(2b-c-a)(2c-a-b)=0$ or
		\item[\emph{(iii)}] $f(z)=Ce^{\frac{-1\pm i \sqrt{3}}{2}z}+\frac{3\pm i\sqrt{3}}{6}(a+b+c)$ and $a^2+b^2+c^2-ab-bc-ca=0$,
	\end{enumerate} 
	where $C$ is a non-zero constant in $\mathbb{C}$.
\end{theoB}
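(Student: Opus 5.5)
The plan for Theorem B is to follow the normal-families route that the abstract advertises. First I would show that $f$ has finite order, in fact order at most one. Then I would show that $f^{(1)}/f$, or a closely related quotient, is constant. Finally I would match constants to read off the three cases.

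\textbf{Step 1: finite order.} Consider the family $\mathcal{F}=\{f_w(z)=f(z+w): w\in\mathbb{C}\}$. Each $f_w$ shares $S$ CM with $f_w^{(1)}$. I would prove that $\mathcal{F}$ is normal on $\mathbb{C}$. Suppose it is not. Zalcman's lemma gives $g_k(\zeta)=\rho_k^{-1}f_{w_k}(z_k+\rho_k\zeta)\to g(\zeta)$, where $g$ is a nonconstant entire function of order at most one. Since $g_k^{(1)}(\zeta)=f^{(1)}_{w_k}(z_k+\rho_k\zeta)$, the sharing hypothesis and Hurwitz's theorem force $g'(\zeta)\in S$ whenever $g(\zeta)=0$. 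They also force $g'$ to omit $S$ away from the zeros of $g$, because $f_{w_k}\to\infty$ or is locally bounded after rescaling. Hence $g'$ omits at least two of the three values of $S$ off a discrete set. A careful accounting of multiplicities, as in the Chang--Fang--Zalcman argument, shows that $g'$ is constant. Then $g$ is linear, and this contradicts the structure of the limit.

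Normality gives a uniform bound on $f^\#(z)$. By the Clunie--Hayman type theorem this forces $f$ to have order at most one.

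\textbf{Step 2: the Wronskian-type auxiliary function.} Write $P(w)=(w-a)(w-b)(w-c)$. CM sharing of $S$ means $\phi=P(f^{(1)})/P(f)$ is entire and zero-free, so $\phi=e^{\alpha}$. Because $f$ has order at most one, $\alpha$ is linear. The lemma on the logarithmic derivative then gives $T(r,\phi)=O(\log r)$ outside a small set. Hence $\phi$ is a constant $A$, and
\[(f^{(1)}-a)(f^{(1)}-b)(f^{(1)}-c)\equiv A\,(f-a)(f-b)(f-c).\]
Next I would compare the degrees and the leading terms in this identity. Applying the standard argument with $f=Ce^{\lambda z}+d$ and checking that $f$ can have no polynomial part, the identity forces $f$ to be of the form $Ce^{\lambda z}+d$.

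\textbf{Step 3: solving the algebra.} Substitute $f=Ce^{\lambda z}+d$, so that $f^{(1)}=\lambda(f-d)$. The identity becomes the polynomial identity
\[\prod_{s\in S}(\lambda(w-d)-s)\equiv A\prod_{s\in S}(w-s).\]
Comparing the leading coefficients gives $A=\lambda^3$, so the set $\{\lambda^{-1}s+d : s\in S\}$ coincides with $S$. The affine map $s\mapsto \lambda^{-1}s+d$ therefore permutes $\{a,b,c\}$, and its order divides $6$.
\begin{itemize}
\item If the permutation is the identity, then $\lambda=1$ and $d=0$, which is case (i).
\item If it is a transposition, then $\lambda=-1$. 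The fixed point of the map is the remaining element $s$, and it equals the average of the other two. This yields $d=\tfrac23(a+b+c)$ and the condition $(2a-b-c)(2b-c-a)(2c-a-b)=0$, which is case (ii).
\item If it is a $3$-cycle, then $\lambda^{-1}$ is a primitive cube root of unity. The map fixes the centroid and rotates the triangle, so $S$ is equilateral, that is, $a^2+b^2+c^2-ab-bc-ca=0$. This yields case (iii).
\end{itemize}

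I expect Step 1 to be the main obstacle, in particular ruling out the nonnormal limit while handling multiplicities correctly.
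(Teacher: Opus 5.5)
The paper only quotes Theorem B, but its proof of Theorem \ref{t1.1} (Lemmas \ref{ln2.7}, \ref{ln2.9}, \ref{ln2.10} and the proof in the last section) is the Chang--Fang--Zalcman argument transcribed to $\mathbb{C}^n$. Measured against it, your Step 2 has two genuine gaps.

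First, the logarithmic derivative lemma does not make $\phi=P(f^{(1)})/P(f)$ small unless $0\in S$. Write $P(w)=w^3+L_1w^2+L_2w+L_3$ with $L_3=-abc$. The terms $(f^{(1)})^j/P(f)$, $j=1,2,3$, are linear combinations of products of the $f^{(1)}/(f-s)$, $s\in S$, but $L_3/P(f)$ is not, and $m(r,1/P(f))$ can be of the order of $T(r,f)$. So the lemma by itself gives nothing: for $S=\{1,2,3\}$ and $f=e^z+1$ one gets $\phi=1-3e^{-z}$, with $m(r,\phi)\sim r/\pi\sim T(r,f)$. This is why Lemma \ref{ln2.9} disposes of a non-constant $\alpha$ this way only in Case 1 ($abc=0$). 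For $abc\neq0$ it must call on a separate argument (Lemma 6 of Chang--Fang--Zalcman) to exclude $P(f^{(1)})=Ce^{\lambda z}P(f)$ with $\lambda\neq0$ for $f$ of order at most one, and you need to supply such an argument.

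Second, passing from $P(f^{(1)})\equiv AP(f)$ to $f=Ce^{\lambda z}+d$ is the heart of the proof. Your phrase ``applying the standard argument with $f=Ce^{\lambda z}+d$'' presupposes the conclusion, and since $f$ is transcendental there are no degrees to compare. What must be shown is that $f^{(1)}$ is zero-free; then $f^{(1)}=De^{\lambda z}$ with $\lambda\neq0$, and integration gives the form (this is Lemma \ref{ln2.10}). The paper gets zero-freeness from the differentiated identity $P'(f^{(1)})f^{(2)}=AP'(f)f^{(1)}$, cf.\ (\ref{Eq1.4}):
at a zero of $f^{(1)}$, $f^{(2)}$ vanishes to exactly one order less, which forces $L_2=P'(0)=0$;
then $L_1=0$ is excluded by the parity of multiplicities in $f^{(1)}f^{(2)}=Af^2$;
for $L_1\neq0$, studying the zeros of $f$ and of $3f+2L_1$ in (\ref{Eq1.6}) either yields $f^{(2)}=\kappa f^{(1)}$ with $\kappa$ constant, so $f^{(1)}$ has no zeros after all, or yields the incompatible relations (\ref{Eq1.11})--(\ref{Eq1.12}).

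Step 1 does not work as written either. The exponent-one rescaling $\rho_k^{-1}f_{w_k}(z_k+\rho_k\zeta)$ is available (Pang--Zalcman) only if $|f^{(1)}|$ is bounded on the zeros of $f$, which you know only when $0\in S$. The implication $g=0\Rightarrow g'\in S$ also needs $0\in S$. When $0\notin S$, your Hurwitz argument only shows that $g'$ omits $S$, which leaves a linear limit and no contradiction. Use exponent $0$ instead, as in Lemma \ref{ln2.7}. For $F_k(\zeta)=f_{w_k}(z_k+\rho_k\zeta)\to F$ one has $F_k'(\zeta)=\rho_kf_{w_k}^{(1)}(z_k+\rho_k\zeta)$, while $|f^{(1)}|\le\max\{|a|,|b|,|c|\}$ at the $S$-points of $f$. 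By Hurwitz every $a$-, $b$- and $c$-point of $F$ is then multiple, and the second main theorem gives $2T(r,F)\le\frac32T(r,F)+o(T(r,F))$, a contradiction.

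Your Step 3, on the other hand, is correct and neater than the coefficient comparison (\ref{M.10})--(\ref{M.13}). The identity $P(\lambda(w-d))\equiv\lambda^3P(w)$ says exactly that $s\mapsto s/\lambda+d$ permutes $S$, and the identity, a transposition and a $3$-cycle give (i), (ii) and (iii) with the right constants.
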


\medskip

\noindent
{\bf Transition to several variables.}
A natural and highly nontrivial question arises: to what extent do such rigidity phenomena persist in the setting of several complex variables? The transition from $\mathbb{C}$ to $\mathbb{C}^n$ is far from formal, as the geometry of zeros, multiplicities, and divisors becomes significantly more intricate. In particular, the interaction between partial derivatives and shared value sets introduces new structural constraints that have no analogue in the one-dimensional theory.

The main contribution of this paper is to demonstrate that the classical uniqueness paradigm extends, in a sharp and explicit form, to entire functions defined on $\mathbb{C}^n$. Remarkably, despite the increased complexity of the ambient space, the functional forms that arise remain rigid and exhibit a unified exponential structure.

\medskip

We define $\mathbb{Z}_+=\mathbb{Z}[0,+\infty)=\{m\in \mathbb{Z}: 0\leq m<+\infty\}$ and $\mathbb{Z}^+=\mathbb{Z}(0,+\infty)=\{m\in \mathbb{Z}: 0<m<+\infty\}$.
On $\mathbb{C}^n$, we define
\[\partial_{z_i}=\frac{\partial}{\partial z_i},\partial^2_{z_jz_i}=\frac{\partial^2}{\partial z_jz_i},\ldots, \partial_{z_i}^{l_i}=\frac{\partial^{l_i}}{\partial z_i^{l_i}}\;\;\text{and}\;\;\partial^{I}=\frac{\partial^{|I|}}{\partial z_1^{i_1}\cdots \partial z_n^{i_n}},\]
where $l_i\in \mathbb{Z}^+\;(i=1,2,\ldots,n)$ and $I=(i_1,\ldots,i_n)\in\mathbb{Z}^n_+$ is a multi-index such that $|I|=\sum_{j=1}^n i_j$.

\medskip

We are now in a position to state our main theorem.
\begin{theo}\label{t1.1}
	Let $f$ be a non-constant entire function in $\mathbb{C}^n$ and let $S=\{a,b,c\}$, where $a$, $b$,  $c$ $\in \mathbb{C}$. Suppose $E(S,f)=E\left(S,\partial_{z_i}(f)\right)$ for $i=1,2,\ldots,n$.  
\begin{enumerate}
	\item [\emph{(A)}] Suppose $a+b+c=0$. Then one of the following holds: 
	\begin{enumerate}
	\item[\emph{$(i)$}] If $abc\neq0$, then $f(z)=Ce^{a_{11}z_1+a_{12}z_2+\ldots+a_{1n}z_n}$ and $ab+bc+ca=0$, where $a_{11}, a_{12},\ldots,\\a_{1n}$ and $C$ are non-zero constants such that $a_{1i}^3=1$ for $i=1,2,\ldots,n$.
	\item[\emph{$(ii)$}] If $abc=0$, then $f(z)=Ce^{a_{11}z_1+a_{12}z_2+\ldots+a_{1n}z_n}$ and $ab+bc+ca\neq0$, where $a_{11}, a_{12},\ldots,\\a_{1n}$ and $C$ are non-zero constants such that $a_{1i}^2=1$ for $i=1,2,\ldots,n$;
	\end{enumerate}
	\item[\emph{(B)}] Suppose $a+b+c\neq 0$ and set $t=z_1+z_2+\cdots+z_n$. Then either
	\begin{enumerate}
		\item[\emph{$(i)$}] $f(z)=Ce^{t}$ or
		\item[\emph{$(ii)$}] $f(z)=Ce^{-t}+\frac{2}{3}(a+b+c)$ and $(2a-b-c)(2b-c-a)(2c-a-b)=0$ or
		\item[\emph{$(iii)$}] $f(z)=Ce^{\left(\frac{-1\pm i\sqrt{3}}{2}\right) t}+\frac{3\pm i\sqrt{3}}{6}(a+b+c)$
		and $a^2+b^2+c^2-ab-bc-ca=0$,
	\end{enumerate}
	where $C$ is a non-zero constant in $\mathbb{C}$.
\end{enumerate}
\end{theo}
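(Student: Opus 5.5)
The plan is to reduce to one variable by restricting $f$ to complex lines parallel to the coordinate axes, and then to combine the resulting one-variable descriptions. The key first step is to show that the one-variable conclusion holds on every line for which it makes sense. Fix $i$ and a point $w=(w_1,\ldots,\widehat{w_i},\ldots,w_n)\in\mathbb{C}^{n-1}$, and set
\[
g_w(\zeta)=f(w_1,\ldots,w_{i-1},\zeta,w_{i+1},\ldots,w_n).
\]
Then $g_w'=\partial_{z_i}f$ restricted to this line. The hypothesis $E(S,f)=E(S,\partial_{z_i}f)$ is an equality of divisors in $\mathbb{C}^n$, and it does not automatically restrict to lines, because a line may lie in a preimage hypersurface. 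I would therefore use the normal-family method that underlies Chang--Fang--Zalcman instead of quoting Theorem B directly.

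First, I would show that $\partial_{z_i}f$ is bounded on the preimage $f^{-1}(S)$ and that the family $\{f(z+\cdot)\}$ is normal in the $z_i$-direction. This would use a Zalcman rescaling in several variables, in the direction of the spherical metric and normal families mentioned in the keywords. The conclusion I aim for is that $f$ has exponential type in each variable separately, uniformly in the remaining variables. Second, for generic $w$, meaning $w$ outside a pluripolar set where restriction fails, the restriction $g_w$ does satisfy $E(S,g_w)=E(S,g_w')$ CM. If $g_w$ is nonconstant, Theorem B then gives
\[
g_w(\zeta)=C(w)e^{\lambda\zeta}+d,
\]
where $\lambda\in\{1,-1,\tfrac{-1\pm i\sqrt3}{2}\}$ and $d$ is prescribed by $a,b,c$. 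Since $\lambda$ and $d$ take finitely many values, continuity shows they are constant in $w$; call them $\lambda_i$ and $d_i$. Then $f=C_i(\hat z_i)e^{\lambda_i z_i}+d_i$ for each $i$, where $C_i$ does not depend on $z_i$.

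Next I combine the coordinates. Comparing the representations for $i$ and $j$ gives $d_i=d_j=:d$ whenever $f$ actually depends on both variables. It also forces
\[
f=Ce^{\sum_i\lambda_i z_i}+d .
\]
This follows from $\partial_{z_i}(f-d)=\lambda_i(f-d)$ for all $i$, which integrates to an exponential. The case where $f$ is independent of some $z_i$ must be excluded. In that case $\partial_{z_i}f\equiv0$, so $E(S,0)$ would have to equal $E(S,f)$. This is impossible unless $0\notin S$ and $f$ omits $S$, and a nonconstant entire function cannot omit three values on a line by Picard's theorem. So every $\lambda_i\neq0$.

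Now substitute $f=Ce^{L}+d$ with $L=\sum\lambda_iz_i$ back into the set condition. For $f=Ce^L+d$ and $\partial_{z_i}f=\lambda_iCe^L$, put $u=Ce^L$; as a function on $\mathbb{C}^n$ it takes every nonzero value. The condition becomes
\[
\{u:\ u+d\in S\}=\{u:\ \lambda_iu\in S\}\setminus\{0\}
\]
as multisets, with the divisor multiplicities automatically $1$. This is a purely algebraic system, to be solved as in Chang--Fang--Zalcman.

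When $d=0$, the condition says $S\setminus\{0\}$ is invariant under multiplication by $\lambda_i$. If $abc\ne0$, $S$ must be a $\lambda_i$-orbit, so $\lambda_i^3=1$ and $S=\{a,\omega a,\omega^2a\}$. This gives $a+b+c=0$ and $ab+bc+ca=0$, which is (A)(i). If one element is $0$, then $\lambda_i$ swaps the other two elements, so $\lambda_i^2=1$ and $b=-c$. This is (A)(ii); here $ab+bc+ca=-b^2\neq0$. When $d\neq0$, the translated set $S-d$ minus $0$ must equal $\lambda_i^{-1}S$ minus $0$. Solving this case by case reproduces the one-variable constraints, forcing $\lambda_i=\lambda$ to be independent of $i$. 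That yields (B), with $t=\sum z_i$. It also forces $a+b+c\neq0$ exactly when $d\neq0$ or $\lambda=1$. I would still check the borderline case $\lambda=1$, $d=0$, where $a+b+c=0$ is compatible with both (A) and (B)(i).

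I expect the main obstacle to be the restriction step: justifying that for generic lines the CM sharing of the set passes to the restriction, or else redoing the Zalcman-type normality argument uniformly in the transverse variables. I would first try the following argument. The exceptional $w$, namely those whose line lies inside a component of $f^{-1}(S)$ or $(\partial_{z_i}f)^{-1}(S)$, form a proper analytic subset of $\mathbb{C}^{n-1}$. Off that set, multiplicities of restrictions agree with the ambient multiplicities by the local description of divisors (Stoll). The final algebraic case analysis is routine but lengthy.
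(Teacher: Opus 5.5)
Your route is genuinely different from the paper's. With the repairs below, it proves every part of the statement that is actually true.

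\textbf{Comparison with the paper.} The paper works globally in $\mathbb{C}^n$. From $P(\partial_{z_i}f)=e^{\alpha_i}P(f)$, where $P(w)=(w-a)(w-b)(w-c)$, it obtains $\rho(f)\le 1$ by Zalcman rescaling and Marty's criterion (Lemmas~\ref{ln2.1}--\ref{ln2.3}, \ref{ln2.7}), so the $\alpha_i$ are linear. It then imitates the Chang--Fang--Zalcman case analysis directly in $\mathbb{C}^n$: non-constant $\alpha_k$ in Lemma~\ref{ln2.9}; constant $\alpha_i$ split by whether $0$ is Picard exceptional for $\partial_{z_i}f$ (Lemma~\ref{ln2.10} and Sub-case~2.2). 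Theorem~B is used only on the coordinate line through the origin.

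You instead apply Theorem~B on \emph{every} $z_i$-line and glue. This needs no growth estimates, and it buys something real. You get the global form of $f$, whereas Lemma~\ref{ln2.9} only describes $f$ on a single line. You also see directly that a non-constant $\alpha_k$ cannot occur, since for admissible $g=Ce^{\lambda\zeta}+d$ one has $P(g')/P(g)\equiv\lambda^3$. The Zalcman/exponential-type step you announce is never used later and can be dropped.

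\textbf{Repairs.} The restriction step is not an obstacle. Equality of the divisors is equivalent to $P(\partial_{z_i}f)=e^{\alpha_i}P(f)$ with $e^{\alpha_i}$ zero-free. Restricting this identity to any $z_i$-line on which $f$ is non-constant gives CM sharing on that line, with no genericity needed.

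The justification you sketch for restriction is in fact false. Off lines contained in the hypersurface, restricted multiplicities can still exceed ambient ones through tangency. For example, $z_1-z_2^2$ vanishes to order $1$ at the origin, but its restriction to the $z_2$-line $z_1=0$ vanishes to order $2$.

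For the constancy of $\lambda$, argue as follows. The set $Z$ of $w$ for which $g_w$ is constant is analytic, and proper by your Picard argument, so $\mathbb{C}^{n-1}\setminus Z$ is connected and dense. Locally $\lambda(w)=g_w''(\zeta_0)/g_w'(\zeta_0)$ is continuous, so $\lambda$ and $d=d(\lambda)$ are constant. Then $\partial_{z_i}f=\lambda_i(f-d_i)$ extends to all of $\mathbb{C}^n$ by density.

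In the algebra there are three slips:
\begin{itemize}
\item the left-hand set must also exclude $u=0$;
\item your orbit argument tacitly assumes $\lambda_i\ne 1$;
\item the claim that $a+b+c\ne 0$ exactly when $d\ne 0$ or $\lambda=1$ is wrong, because $(\lambda,d)=(1,0)$ is admissible for every $S$.
\end{itemize}

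\textbf{The borderline case is a counterexample.} That last slip is the heart of the matter. The case $\lambda=1$, $d=0$ you postponed is not a loose end; it is a counterexample to (A)(i) as stated. Take $S=\{1,2,-3\}$, so $a+b+c=0$, $abc=-6\ne 0$ and $ab+bc+ca=-7\ne 0$. Then $f=e^{z_1+\cdots+z_n}$ satisfies $\partial_{z_i}f=f$ for every $i$, so the hypothesis holds, yet (A)(i) asserts $ab+bc+ca=0$. For $n=1$ this is just Theorem~B(i), which imposes no condition on $S$.

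The paper's proof fails at the matching point, Sub-case~2.2.1.1 of the proof of Lemma~\ref{ln2.10}. There, with $D=0$ and $A_i=1$, (\ref{M.12}) gives only $(a_{1i}-1)L_2=0$, not $L_2=0$. So no argument can close (A)(i) as written.

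What your method does prove is (A)(ii) and (B) verbatim, and (A)(i) in corrected form: all $a_{1i}^3=1$, and $ab+bc+ca=0$ unless $a_{11}=\cdots=a_{1n}=1$.
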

\begin{rem}
	Considering the function $f(z)=c e^{\omega z_1+z_2^2+\cdots+z_n^2}$ and the set $S=\{1,\omega,\omega^{2}\}$, it is easy to verify that the condition $E(S,f)=E\left(S,\partial_{z_i}(f)\right)$ holds only for $i=1$, and not for any other index. Therefore, for the validity of Theorem \ref{t1.1}, it is necessary that the condition $E(S,f)=E\left(S,\partial_{z_i}(f)\right)$ holds for every $i=1,2,\ldots,n$.
\end{rem}
\begin{rem}
	Theorem \ref{t1.1} reveals an interesting feature in several complex variables. 
	Although an entire function $f$ on $\mathbb{C}^n$ can generally depend on all variables $z_1,\ldots,z_n$ in a complicated way, the condition $E(S,f)=E\left(S,\partial_{z_i}(f)\right), \quad i=1,2,\ldots,n,$
	forces a strong restriction: the function actually depends only on the single combination $z_1+z_2+\cdots+z_n$.
	
	As a result, the high-dimensional behaviour reduces to a one-dimensional one, indicating that the set-sharing condition enforces a dimensional reduction. 
\end{rem}
\subsection*{Geometric interpretation of Theorem \ref{t1.1}}
\par
The conclusions of Theorem \ref{t1.1} split into two qualitatively distinct regimes. 
Case (A) yields only exponential structures, whereas case (B) produces rich geometric dynamics in the complex plane.\\
\textbf{Analysis of Case (A):}\\
In case (A), the function $f(z)=C e^{a_{11}z_1+\cdots+a_{1n}z_n}, \quad a_{1i}^k=1, \quad k\in\{2,3\}$;
exhibits purely exponential growth along fixed directions. 
Thus, the geometry is essentially linear and rigid, and does not lead to additional structure in the $f$-plane.
\\
\textbf{Analysis of Case (B):}\\
\textbf{Case (i) (Pure exponential behaviour along a single direction):}\\
Here $f(z)=C e^{t}$, where $t=z_1+\cdots+z_n$. Since $f$ depends only on the single linear combination $t$, it is constant on the hyperplanes $z_1+\cdots+z_n = \text{constant}$.
Let $e=(1,\ldots,1)$ and write $v=\alpha e+w$, where $\alpha=\frac{1}{n}\sum_{i=1}^n v_i$ and $\sum_{i=1}^n w_i=0$, this implies $w\perp e$. Here, for a constant $h$ when $z=(z_1, z_2,\ldots, z_n)$ is replaced by $z+hv$, then $t$ is replaced by $t+h\sum v_i=t+hn\alpha$, so only the component along $e$ changes $t$ and hence $f=Ce^t$ varies only in the direction $e$ in $\mathbb{C}^n$.\\
\textbf{Case (ii) (Exponential decay with shift):}\\ Here $f(z)=Ce^{-t}+\frac{2}{3}(a+b+c)$, where $t=z_1+\cdots+z_n$. Consequently, $f$ again depends only on $t$, showing exponential decay along $(1,1,\ldots,1)$ together with a constant shift. Here, the condition over $a$, $b$, and $c$ implies collinearity in $\mathbb{C}$.\\ %The condition $(2a-b-c)(2b-c-a)(2c-a-b)=0$ implies that one of
%$2a=b+c,\quad 2b=c+a,\quad 2c=a+b$
%holds, i.e., one of $a,b,c$ is the average of the other two; hence $a,b,c$ are collinear in $\mathbb{C}$. \\
\textbf{Case (iii) (Oscillatory exponential behaviour):}\\
Here $f(z)=C e^{\lambda t}+\text{constant}$, $t=z_1+\cdots+z_n,\quad 
\lambda=\frac{-1\pm i\sqrt{3}}{2}$. We can write
$e^{\lambda t}= e^{\Re(\lambda t)}\, e^{i\,\Im(\lambda t)}.
$ Let $t=x+iy$. Considering $\lambda=\frac{-1+ i\sqrt{3}}{2}$, we can have $\Re(\lambda t)
= -\tfrac{1}{2}x - \tfrac{\sqrt{3}}{2}y, 
\quad
\Im(\lambda t)
= \tfrac{\sqrt{3}}{2}x - \tfrac{1}{2}y.
$
Hence,
$
|e^{\lambda t}| = e^{\Re(\lambda t)}
= e^{-\frac{1}{2}x - \frac{\sqrt{3}}{2}y},
$
which shows that the magnitude exhibits direction-dependent exponential growth or decay, depending on the sign of $\Re(\lambda t)$, while
$e^{i\,\Im(\lambda t)}$
produces oscillatory (rotational) behaviour. The case $\lambda=\frac{-1- i\sqrt{3}}{2}$ can similarly be treated analogously. 

 Along the direction, $(1,1,\ldots,1)$ in $\mathbb{C}^n$; $f$ exhibits oscillatory exponential behaviour, with growth or decay determined by $\Re(\lambda t)$ and rotation in the complex plane governed by $\Im(\lambda t)$.

Further, the condition imposed upon $a,b,c$ are equilateral triangle in the complex plane.
 %$a^2+b^2+c^2-ab-bc-ca=0 
%\;\Rightarrow\; 
%(a-b)^2+(b-c)^2+(c-a)^2=0,$
%

\noindent

Consequently, the $n$-dimensional behaviour reduces to a geometric motion in the complex $f$-plane,  parametrized by the real and imaginary parts of $\lambda t$ summarized as follows: 

\begin{table}[h]
	\centering
	\renewcommand{\arraystretch}{1.4} 
	\setlength{\tabcolsep}{8pt}       
	\begin{tabularx}{\textwidth}{
		!{\vrule width 1.5pt}
		>{\centering\arraybackslash}p{2cm}
		!{\vrule width 1.5pt}
	>{\centering\arraybackslash}p{2cm}
		!{\vrule width 1.5pt}
		X
		!{\vrule width 1.5pt}}
		\noalign{\hrule height 1.2pt} 
		\rowcolor{gray!20}
		\textbf{Regime} & \textbf{Analytic Condition} & \textbf{\hspace{5cc}Geometric Dynamics} \\
		\noalign{\hrule height 1.2pt} 
		
		Pure oscillation 
		& $\Re(\lambda t)=0$ 
		& $|f-d|=|C|$ is constant; trajectory is confined to a circle centered at $d$, governed by rotation in $\Im(\lambda t)$ \\
		\noalign{\hrule height 1.2pt} 
		
		Decay 
		& $\Re(\lambda t)<0$ 
		& Radius $|f-d|=|C| e^{\Re(\lambda t)}$ decreases; trajectories form inward spirals approaching $d$ \\
		\noalign{\hrule height 1.2pt}
		
		Growth 
		& $\Re(\lambda t)>0$ 
		& Radius $|f-d|=|C| e^{\Re(\lambda t)}$ increases; trajectories form outward spirals moving away from $d$ \\
		\noalign{\hrule height 1.2pt} 
	\end{tabularx}
	\vspace{1cc}
	\caption{Geometric behaviour of $f=C e^{\lambda t}+d$ in the $f$-plane. 
		For fixed $\Re(\lambda t)$, trajectories are circles, while variation in $\Re(\lambda t)$ produces spiral motion.}
\end{table}

\begin{figure}[h]
	\centering
	
	\begin{center}
		
		\tikzset{
			axis/.style={line width=1.8pt, black, ->, >=stealth}, % thicker axes
			maincurve/.style={line width=1.2pt},
			auxcurve/.style={line width=0.9pt, dashed},
			innercurve/.style={line width=0.8pt, dotted}
		}
		
		\begin{minipage}[t]{0.43\textwidth}
			\centering
			\begin{tikzpicture}[scale=0.66, baseline=(current bounding box.south)]
				
				\path (-4.8,-4.8) rectangle (4.8,4.8);
								
				\draw[axis] (-4.5,0) -- (4.5,0) node[right] {$x$};
				\draw[axis] (0,-4.5) -- (0,4.5) node[above] {$y$};
								
				\foreach \c in {-3,-2,-1,0,1,2,3} {
					\draw[maincurve, blue!70] 
					(-4,{(-2*\c-1*(-4))/sqrt(3)}) -- 
					(4,{(-2*\c-1*(4))/sqrt(3)});
				}
				
			\end{tikzpicture}
		\end{minipage}
		\hspace{0.03\textwidth}
		\begin{minipage}[t]{0.45\textwidth}
			\centering
			\begin{tikzpicture}[scale=0.78, baseline=(current bounding box.south)]
				
			\path (-4.8,-4.8) rectangle (4.8,4.8);

				\draw[axis] (-4.5,0) -- (4.5,0) node[right] {$\Re f$};
				\draw[axis] (0,-4.5) -- (0,4.5) node[above] {$\Im f$};
				
				\coordinate (d) at (1,0.8);

				\fill[red!70, opacity=0.25] (d) circle (5pt);
				\fill[red!80, opacity=0.4] (d) circle (3pt);
				\fill[red!100] (d) circle (1.6pt);

				\node[
				font=\fontsize{13}{13}\selectfont\bfseries,
				text=black
				] at ($(d)+(0.75,0.07)$) {$d$};

				\foreach \r in {0.15,0.3,0.4} {
					\draw[innercurve, red!30] (d) circle (\r);
				}

				\foreach \r in {0.5,1,1.5,2} {
					\draw[maincurve, red!70] (d) circle (\r);
				}

				\foreach \r in {2.5,3,3.5} {
					\draw[auxcurve, red!70] (d) circle (\r);
				}
				
			\end{tikzpicture}
		\end{minipage}
		
		\vspace{0.25cm}
		
		{{\footnotesize Left: Level lines of $\Re(\lambda t)$ in the $t$-plane 
				($t=z_1+\cdots+z_n$), shown as parallel straight lines. 
				Right: Their images under $f=C e^{\lambda t}+d$ form circles centered at 
				$d=\frac{3\pm i\sqrt{3}}{6}(a+b+c)$ in the $f$-plane. 
				Solid curves represent finite levels, while dashed curves indicate limiting behaviour.}}
		
	\end{center}
	
	\caption{Geometric correspondence between level lines in the $t$-plane and their images in the $f$-plane.}
	\label{fig:level-lines}
	
\end{figure}

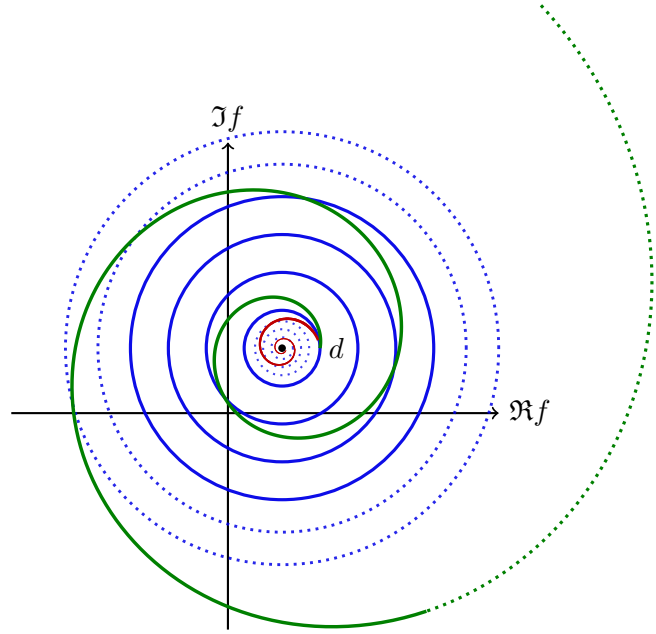
\begin{figure}[h]
	\centering
	\begin{tikzpicture}[scale=0.72, baseline=(current bounding box.center)]

		\colorlet{myblue}{blue!90!black}
		\colorlet{myred}{red!75!black}
		\colorlet{mygreen}{green!50!black}

		\draw[->, thick] (-4,0) -- (5,0) node[right] {$\Re f$};
		\draw[->, thick] (0,-4) -- (0,5) node[above] {$\Im f$};

		\coordinate (d) at (1,1.2);
		\fill[black] (d) circle (2pt);
		\node[font=\bfseries] at ($(d)+(1.0,0.01)$) {$d$};

		\foreach \r in {0.7,1.4,2.1,2.8} {
			\draw[myblue, line width=1.2pt, opacity=0.95] (d) circle (\r);
		}

		\foreach \r in {0.2,0.35,0.5} {
			\draw[myblue, dotted, line width=0.8pt, opacity=0.75] (d) circle (\r);
		}

		\foreach \r in {3.4,4.0} {
			\draw[myblue, dotted, line width=1.1pt, opacity=0.85] (d) circle (\r);
		}

	\foreach \k in {0,...,40} {
		\pgfmathsetmacro{\tstart}{6.5 + 0.28*\k}
		\pgfmathsetmacro{\tend}{6.9 + 0.28*(\k+1)}
		\pgfmathsetmacro{\lw}{1.6*(1 - \tstart/18)}
		
		\draw[myred, line width=\lw pt]
		plot[variable=\t, domain=\tstart:\tend, samples=20]
		({1 + 2.2*exp(-0.18*\t)*cos(\t r)},
		{1.2 + 2.2*exp(-0.18*\t)*sin(\t r)});
	}

		\def\rzero{0.7}

		\draw[mygreen, line width=1.3pt, samples=300, variable=\t, domain=0:11.5]
		plot ({1 + \rzero*exp(0.18*\t)*cos(\t r)}, {1.2 + \rzero*exp(0.18*\t)*sin(\t r)});

		\draw[mygreen, dotted, line width=1.2pt, samples=200, variable=\t, domain=11.5:13.5]
		plot ({1 + \rzero*exp(0.18*\t)*cos(\t r)}, {1.2 + \rzero*exp(0.18*\t)*sin(\t r)});
		
	\end{tikzpicture}
	
	\caption{Blue circles indicate trajectories for fixed $\Re(\lambda t)$ and the first circle corresponding to $\Re(\lambda t)=0$. 
		The red inward spiral shows decay, while the green outward spiral shows growth. 
		Rotation around $d$ is governed by $\Im(\lambda t)$, producing the spiraling motion.}
\end{figure}
\section{\bf{Basic Notations in Several Complex Variables}}

Nevanlinna theory, originally developed for meromorphic functions of a single complex variable, has undergone profound generalization to the setting of several complex variables. In this broader framework, it investigates the value distribution of meromorphic mappings from $\mathbb{C}^n$ into complex projective spaces or, more generally, complex manifolds. This theory serves as a powerful analytical tool, yielding deep insights into:

\begin{itemize}
	\item[(i)] the growth and value distribution of meromorphic mappings,
	\item[(ii)] uniqueness and rigidity phenomena for solutions of linear and nonlinear partial differential equations,
	\item[(iii)] compactness criteria and convergence properties of families of meromorphic maps.
\end{itemize}

Its scope of application extends across complex analysis, partial differential equations, complex geometry, and mathematical physics, providing both a unifying perspective and a rich source of techniques. The works \cite{BM}, \cite{Cao-Korhonen-2016}, \cite{CL}, \cite{PVD}-\cite{PVD3}, \cite{BQL1}-\cite{FL1}, \cite{MDP}, \cite{Majumder-2026}, \cite{Majumder-Sarkar-2027}, \cite{GS2} collectively offer a comprehensive account of the current developments in Nevanlinna value distribution theory in several complex variables.

\medskip

Let $A\subseteq \mathbb{C}^n$ and $r\geq 0$. Following \cite[pp.~6]{Stoll-1974}, we define
\[
A[r]=\{z\in A: \|z\|\leq r\}, \quad 
A(r)=\{z\in A: \|z\|<r\}, \quad 
A\langle r\rangle=\{z\in A: \|z\|=r\},
\]
and the exhaustion function $\tau: \mathbb{C}^n \to \mathbb{R}_+$ by $\tau(z)=\|z\|^2$.

On $\mathbb{C}^n$, the exterior derivative admits the decomposition
\[
d=\partial+\bar{\partial}, \qquad 
d^c=\frac{i}{4\pi}(\bar{\partial}-\partial),
\]
so that
\[
dd^c=\frac{i}{2\pi}\partial\bar{\partial}.
\]
The standard Kähler form on $\mathbb{C}^n$ is given by
\[
\upsilon = dd^c \tau > 0.
\]
On $\mathbb{C}^n\backslash\{0\}$, we further define
\[
\omega = dd^c \log \tau \geq 0, 
\qquad 
\sigma = d^c \log \tau \wedge \omega^{n-1},
\]
where $n=\dim(\mathbb{C}^n)$.

\medskip

For $t>0$, the counting function associated with a divisor $\nu$ is defined by
\begin{align*}
	n_{\nu}(t)=t^{-2(n-1)}\int_{A[t]} \nu\, \upsilon^{n-1},
\end{align*}
where $A=\operatorname{supp}\nu$. The corresponding integrated counting (or valence) function is given by
\[
N_{\nu}(r)=\int_{r_0}^r n_{\nu}(t)\frac{dt}{t}, \qquad (r\geq r_0).
\]

\medskip

We adopt the notation
\[
N_{\mu_f^a}(r)=N(r,a;f)\quad \text{for } a\in\mathbb{C}, 
\qquad 
N_{\mu_f^\infty}(r)=N(r,f).
\]

\smallskip

Using the positive logarithm $\log^+ x=\max\{\log x,0\}$, the proximity function of $f$ is defined as
\begin{align*}
	m(r,f)=\int_{\mathbb{C}^n\langle r\rangle} \log^+|f| \;\sigma.
\end{align*}

The characteristic function of $f$ is then given by
\[
T(r,f)=m(r,f)+N(r,f).
\]
For $a\in\mathbb{C}$, we define
\[
m(r,a;f)=
\begin{cases}
	m(r,f), & a=\infty,\\
	m\big(r, \frac{1}{f-a}\big), & a\in\mathbb{C}.
\end{cases}
\]
The First Main Theorem asserts that
\[
m(r,a;f)+N(r,a;f)=T(r,f)+O(1),
\]
where $O(1)$ denotes a bounded quantity for sufficiently large $r$.

Finally, the order of growth of $f$ is defined by
\begin{align*}
	\rho(f):=\limsup_{r\to\infty}\frac{\log T(r,f)}{\log r}.
\end{align*}

\section{\bf{Auxiliary Lemmas}}
For every function $\varphi$ of class $C^2(\Omega)$, we define at each point $z\in\Omega$ an hermitian form (see \cite{PVD1,PVD2})
\begin{align*}
L_z(\varphi,\nu)=\sideset{}{_{l,k=1}^n}{\sum} \frac{\partial^2 \varphi(z)}{\partial z_k \partial \ol{z}_l}\nu_k \ol{\nu}_l,
\end{align*}
which is called the Levi form of the function $\varphi$. For a holomorphic function $f$ in $\Omega$, we define
\begin{align*}
f^{\#}(z)=\max_{||\nu||=1}\sqrt{L_z(\log (1+|f(z)|^2),\nu)},
\end{align*}
where $\nu=(\nu_1,\ldots,\nu_n)$. This quantity is well defined, since the Levi form $L_z(\log (1+|f(z)|^2),\nu)$ is non-negative for all $z\in\Omega$. Let ${\bf \nabla} f=(f_{z_1},\ldots,f_{z_n})$.

Applying Cauchy-Schwarz inequality, it is easy to prove that (see \cite[Remark 1]{ZZY1})
\begin{align*}
f^{\#}(z)=\sup_{||\nu||=1}\frac{|\langle {\bf \nabla} f(z),\nu\rangle|}{1+|f(z)|^2}=\frac{||{\bf \nabla} f(z)||}{1+|f(z)|^2}=\frac{\sqrt{\sum_{j=1}^n|f_{z_j}(z)|^2}}{1+|f(z)|^2},\;\;z\in\Omega,
\end{align*}
where $\langle z, w\rangle=\sum_{j=1}^n z_j\ol{w}_j$ is the Hermitian scalar product for $z=(z_1,\ldots,z_n)\in\mathbb{C}^n$ and $w=(w_1,\ldots,w_n)\in\mathbb{C}^n$.
\medskip

A family $\mathcal{F}$ of holomorphic functions on a domain $\Omega\subset \mathbb{C}^n$ is normal in $\Omega$ if every sequence of functions $\{f_j\}\subseteq \mathcal{F}$ contains either a subsequence which converges to a limit function $f\neq \infty$ uniformly on each compact subset of $\Omega$, or a subsequence which converges uniformly to $\infty$ on each compact subset.

A family $\mathcal{F}$ is said to be normal at a point $z_0\in \Omega$ if it is normal in some neighbourhood of $z_0$. A family of analytic functions $\mathcal{F}$ is normal in a domain $\Omega$ if and only if $\mathcal{F}$ is normal at each point of $\Omega$.

\smallskip
We recall the Marty's \cite{PVD1} characterization of normal families in terms of the spherical metric.

\begin{lem}\label{ln2.1}\cite[Theorem 2.1]{PVD1} A family $\mathcal{F}$ of functions holomorphic on $\Omega$ is normal on $\Omega\subset \mathbb{C}^n$ if and only if for each compact subset $K\subset \Omega$, there exists $M(K)$ such that at each point $z\in K$,
$f^{\#}(z)\leq M(K)$ $\forall$ $f\in\mathcal{F}$.
\end{lem}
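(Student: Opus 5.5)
The approach is to reduce both directions to the one–variable mechanism behind Marty's theorem, using two facts about $f^{\#}$. The first is that it is invariant under $f\mapsto 1/f$. The second is that it controls the spherical (chordal) distance along straight segments. For the invariance, note that for a holomorphic $f$ with $f(z)\neq 0$ we have $\nabla(1/f)=-\nabla f/f^2$. Hence
\[
\frac{\|\nabla(1/f)\|}{1+|1/f|^2}=\frac{\|\nabla f\|}{|f|^2+1},
\]
that is, $(1/f)^{\#}=f^{\#}$, using the formula $f^{\#}=\|\nabla f\|/(1+|f|^2)$ recalled above.

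\emph{Necessity.} Suppose $\mathcal{F}$ is normal but $\sup_K f^{\#}=\infty$ for some compact $K$. Then there are $f_j\in\mathcal{F}$ and $z_j\in K$ with $f_j^{\#}(z_j)\to\infty$; passing to a subsequence, $z_j\to z_0\in K$. By normality a further subsequence either converges locally uniformly to a holomorphic $f$, or converges uniformly to $\infty$ on compacta.
\begin{itemize}
\item In the first case, the Cauchy estimates on small polydiscs give $\nabla f_j\to\nabla f$ locally uniformly. Hence $f_j^{\#}\to f^{\#}$ uniformly near $z_0$, and $f^{\#}$ is bounded there, a contradiction.
\item In the second case, $1/f_j$ is holomorphic on a fixed polydisc around $z_0$ for large $j$ and tends uniformly to $0$. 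The same Cauchy-estimate argument bounds $(1/f_j)^{\#}=f_j^{\#}$ near $z_0$, again a contradiction.
\end{itemize}

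\emph{Sufficiency.} Assume $f^{\#}\le M(K)$ on each compact $K$. For $z,w$ in a convex compact $K$, integrate along the segment $\gamma(s)=z+s(w-z)$. The Cauchy--Schwarz identity above gives $|\tfrac{d}{ds}f(\gamma(s))|/(1+|f|^2)\le f^{\#}(\gamma(s))\,\|w-z\|$. Since the spherical arc length element is a constant multiple of $|df|/(1+|f|^2)$, the chordal distance satisfies $\chi(f(z),f(w))\le C\,M(K)\|z-w\|$. Thus $\mathcal{F}$ is equicontinuous as a family of maps into the Riemann sphere $\widehat{\mathbb{C}}$, which is compact. By Arzel\`a--Ascoli and a diagonal argument over an exhaustion of $\Omega$ by compacts, every sequence $\{f_j\}$ has a subsequence converging spherically, locally uniformly, to a continuous $g:\Omega\to\widehat{\mathbb{C}}$.

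It remains to show that $g$ is either holomorphic or $\equiv\infty$.
\begin{itemize}
\item Near a point where $g$ is finite, the $f_j$ are eventually uniformly bounded and converge uniformly in the Euclidean sense. The limit is therefore holomorphic, for instance via the Cauchy integral formula on polydiscs.
\item Near a point $z_0$ with $g(z_0)=\infty$, the functions $1/f_j$ are eventually zero-free and holomorphic on a ball $B$ around $z_0$, and converge uniformly to $h=1/g$, which is holomorphic and satisfies $h(z_0)=0$.
\end{itemize}

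The main obstacle is the several-variable Hurwitz step: showing that $h\equiv 0$ on $B$. I would handle it by restricting to complex lines $\zeta\mapsto z_0+\zeta v$ through $z_0$. On each such line the one-variable Hurwitz theorem applies: the limit of zero-free holomorphic functions that vanishes at $\zeta=0$ must vanish identically. Since these lines cover $B$, we get $h\equiv 0$ on $B$. Hence $\{g=\infty\}$ is open. It is also closed, because $g$ is continuous into $\widehat{\mathbb{C}}$ and the finite part is open. Connectedness of $\Omega$ then gives either $g\equiv\infty$, with uniform divergence on compacta, or $g$ finite everywhere and holomorphic. This is exactly normality.
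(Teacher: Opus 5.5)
Your proof is correct and complete. The paper itself gives no proof of this lemma; it simply cites Dovbush \cite[Theorem 2.1]{PVD1}. So the comparison is between a citation and your self-contained derivation, which is the classical Marty argument carried over to $\mathbb{C}^n$:
\begin{itemize}
\item for necessity, the inversion invariance $(1/f)^{\#}=f^{\#}$ together with Cauchy estimates;
\item for sufficiency, the chordal Lipschitz bound along segments coming from $f^{\#}=\|\nabla f\|/(1+|f|^2)$, then Arzel\`a--Ascoli, then a Hurwitz alternative for the spherical limit.
\end{itemize}

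Your version makes explicit the two points the citation hides, namely where several variables and the hypotheses actually matter.

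\emph{The Hurwitz step.} Your reduction to complex lines through $z_0$ is sound because you work on a ball $B$ centred at $z_0$. Every slice $\{\zeta\in\mathbb{C}: z_0+\zeta v\in B\}$ is then a disc about $0$, so the one-variable theorem applies on a connected set, and these slices cover $B$.

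\emph{Connectedness of $\Omega$.} The final dichotomy uses that $\Omega$ is connected, and this is genuinely needed for sufficiency under the paper's global definition of normality. For example, on two disjoint balls take $f_j\equiv j$ on one and $f_j\equiv 0$ on the other. Then $f_j^{\#}\equiv 0$, yet no subsequence converges in either admissible sense. The paper's standing convention that $\Omega$ is a domain covers this, and you invoke it correctly.

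In short, the citation buys brevity. Your argument buys transparency about exactly what the lemma rests on: the gradient formula for $f^{\#}$, slice-wise Hurwitz, and connectedness.
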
 

In 2021, Dovbush \cite{PVD2} obtained an appropriate generalization of Zalcman's Lemma \cite{LZ1} to several complex variables and the result is as follows.
\begin{lem}\label{ln2.2} \cite[Theorem 1.1]{PVD2} Suppose that a family $\mathcal{F}$ of functions holomorphic on $\Omega\subset \mathbb{C}^n$ is not normal at some point $z_0\in\Omega$. Then there exist sequences $f_j\in\mathcal{F}$, $z_j\to z_0$, $\rho_j\to 0$ such that the
sequence
\begin{align*}
g_j(z):=\rho_j^{-\alpha}f_j(z_j+\rho_jz)\;\;(0\leq \alpha<1\;\;\text{arbitrary})
\end{align*}
converges locally uniformly in $\mathbb{C}^n$ to a non-constant entire function $g$ satisfying $g^{\#}(z)\leq g^{\#}(0)=1$.
\end{lem}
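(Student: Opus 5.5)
Lemma \ref{ln2.2} is quoted from \cite{PVD2}, but I would prove it by adapting the one-variable Zalcman--Pang rescaling argument. The only tools needed are the gradient formula $f^{\#}=\|\nabla f\|/(1+|f|^2)$ and Marty's criterion, Lemma \ref{ln2.1}.

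\textbf{Setup.} After a translation and dilation, assume $z_0=0$ and that the closed unit ball lies in $\Omega$. Since $\mathcal F$ is not normal at $0$, Lemma \ref{ln2.1} gives $f_j\in\mathcal F$ and points $w_j\to 0$ with $f_j^{\#}(w_j)\to\infty$. For each fixed $z$ and $f=f_j$, consider
\[
\varphi_{j,z}(t)=\frac{t^{1+\alpha}\,\|\nabla f_j(z)\|}{t^{2\alpha}+|f_j(z)|^2},\qquad t>0.
\]
This is built so that for $g(\zeta)=\rho^{-\alpha}f_j(z+\rho\zeta)$ one gets exactly $g^{\#}(0)=\varphi_{j,z}(\rho)$. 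Differentiating, the sign of $\varphi_{j,z}'$ equals the sign of
\[
(1-\alpha)t^{2\alpha}+(1+\alpha)|f_j(z)|^2,
\]
which is positive precisely because $\alpha<1$. So $\varphi_{j,z}$ is strictly increasing, tends to $0$ as $t\to0$ when $\nabla f_j(z)\neq 0$, and is continuous.

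\textbf{Choice of $z_j$ and $\rho_j$.} Set
\[
M_j=\max_{\|z\|\le 1}\varphi_{j,z}(1-\|z\|),
\]
and let $z_j$ be a point where this maximum is attained. First I check $M_j\to\infty$. For $\|w_j\|\le 1/2$ put $t=1-\|w_j\|\in[1/2,1]$. Then $t^{2\alpha}+|f_j|^2\le 1+|f_j|^2$ and $t^{1+\alpha}\ge 2^{-2}$, so $M_j\ge \tfrac14 f_j^{\#}(w_j)\to\infty$. By monotonicity and the intermediate value theorem there is a unique $\rho_j\in(0,1-\|z_j\|)$ with $\varphi_{j,z_j}(\rho_j)=1$. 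Writing $r_j=1-\|z_j\|$, I expect $\rho_j/r_j\to 0$: otherwise, along a subsequence with $\rho_j\ge \varepsilon r_j$, the scaling structure of $\varphi$ would bound $M_j=\varphi_{j,z_j}(r_j)$ by a constant depending only on $\varepsilon$, contradicting $M_j\to\infty$. This also gives $\rho_j\to0$. Define $g_j(\zeta)=\rho_j^{-\alpha}f_j(z_j+\rho_j\zeta)$, so $g_j^{\#}(0)=1$. After passing to a subsequence $z_j\to z^*$, one may need to re-center to obtain $z_j\to z_0$; this is standard bookkeeping, done by restricting the maximisation to small balls around $0$.

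\textbf{Main estimate.} The step I expect to be hardest is showing $g_j^{\#}(\zeta)\le 1+o(1)$ uniformly for $\|\zeta\|\le R$. For such $\zeta$, the point $z=z_j+\rho_j\zeta$ satisfies $1-\|z\|\ge r_j-R\rho_j=r_j(1-o(1))$. By maximality,
\[
\varphi_{j,z}(1-\|z\|)\le M_j=\varphi_{j,z_j}(r_j).
\]
I would compare $g_j^{\#}(\zeta)=\varphi_{j,z}(\rho_j)$ with $\varphi_{j,z}(1-\|z\|)$ using the elementary inequality, valid for $0<s\le t$,
\[
\varphi(s)\le (s/t)^{1-\alpha}\varphi(t)\cdot\frac{t^{2\alpha}+A}{t^{2\alpha}(s/t)^{2\alpha}\cdots}.
\]
The exact form of this comparison must be arranged carefully. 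The cleaner route is to normalise by the maximiser as in Zalcman's original proof, which produces the bound $g_j^{\#}(\zeta)\le r_j^{2}/(r_j-R\rho_j)^{2}\cdot(1+o(1))\to 1$. This bookkeeping, where $\alpha<1$ is used again, is the main technical point.

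\textbf{Conclusion.} Given the estimate, Lemma \ref{ln2.1} shows $\{g_j\}$ is normal on $\mathbb C^n$. A subsequence therefore converges locally uniformly either to an entire $g$ or to $\infty$. The second alternative is impossible: convergence to $\infty$ would force $g_j^{\#}\to 0$ locally uniformly, contradicting $g_j^{\#}(0)=1$. So $g_j\to g$ with $g^{\#}(0)=\lim g_j^{\#}(0)=1$; in particular $g$ is non-constant. Passing to the limit in the main estimate gives $g^{\#}(\zeta)\le 1$ for all $\zeta$, which proves the lemma.
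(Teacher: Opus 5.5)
\textbf{The gap is in your main estimate.} For $0<\alpha<1$ the point you select, a maximiser of $\Phi_j(z)=\varphi_{j,z}(1-\|z\|)$, is the wrong extremal point, so this is not bookkeeping. Maximality controls $\varphi_{j,z}$ at $t=1-\|z\|$. To descend to $\rho_j$ you need
\[
\frac{\varphi_{j,z}(\rho)}{\varphi_{j,z}(t)}=\Bigl(\frac{\rho}{t}\Bigr)^{1+\alpha}\,\frac{t^{2\alpha}+|f_j(z)|^2}{\rho^{2\alpha}+|f_j(z)|^2}\qquad(\rho<t),
\]
which lies between $(\rho/t)^{1+\alpha}$ and $(\rho/t)^{1-\alpha}$ depending on $|f_j(z)|$. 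Combined with $M_j\le (r_j/\rho_j)^{1+\alpha}$, the best general bound is $g_j^{\#}(\zeta)\le C\,(r_j/\rho_j)^{2\alpha}$, which blows up. The bound you quote comes from Zalcman's case $\alpha=0$, where $\varphi_{j,z}(t)=t\,f_j^{\#}(z)$ is linear in $t$.

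The estimate genuinely fails for your choice. Take $f_j(z)=e^{jz_1}-1$ on the unit ball, so $f_j^{\#}(0)=j$. With $u=e^{jz_1}$ and $\tau=1-\|z\|$,
\[
\Phi_j(z)=j\tau\,\frac{\tau^{-\alpha}|u|}{1+\tau^{-2\alpha}|u-1|^2}\le\frac{j}{2}\Bigl(\tau^{1-\alpha}+\sqrt{\tau^{2}+\tau^{2-2\alpha}}\Bigr).
\]
The right side is increasing in $\tau$. Moreover $\Phi_j$ comes within $O(1)$ of $\frac{j}{2}(1+\sqrt2)$ at $z=(\frac{\log 2}{2j},0,\ldots,0)$, where $u=\sqrt2$ is the unique maximiser of $|u|/(1+|u-1|^2)$. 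Hence $\|z_j\|=O(1/j)$ and $u_j:=e^{jz_{j,1}}\to\sqrt2$, so $f_j(z_j)\to\sqrt2-1\neq 0$. Solving $\varphi_{j,z_j}(\rho_j)=1$ gives $\rho_j\asymp j^{-1/(1+\alpha)}$, so $j\rho_j\to\infty$. Then $g_j(\zeta)=\rho_j^{-\alpha}(u_je^{j\rho_j\zeta_1}-1)$ has a zero $\zeta^{(j)}\to 0$ at which $g_j^{\#}=j\rho_j^{1-\alpha}\asymp j^{2\alpha/(1+\alpha)}\to\infty$. So $\{g_j\}$ is not normal at $0$ and no subsequence converges.

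\textbf{The repair} is to build the monotonicity you proved into the extremal problem itself.
\begin{itemize}
\item Let $\rho_j(z)$ be the unique root of $\varphi_{j,z}(\rho)=1$, with $\rho_j(z)=+\infty$ where $\nabla f_j(z)=0$. The map $z\mapsto 1/\rho_j(z)$ is continuous.
\item Let $z_j$ maximise $(1-\|z\|)/\rho_j(z)$, on shrinking balls as you planned, to get $z_j\to z_0$. Put $\rho_j=\rho_j(z_j)$.
\item Then $r_j/\rho_j$ is the maximum. It tends to $\infty$ by your bound $\varphi_{j,z}(\lambda t)\le\lambda^{1+\alpha}\varphi_{j,z}(t)$ for $\lambda\ge 1$, and it also gives $\rho_j/r_j\to0$.
\item For $z=z_j+\rho_j\zeta$ with $\|\zeta\|\le R$, maximality gives $\rho_j(z)\ge\rho_j(1-R\rho_j/r_j)$.
\item Since $\varphi_{j,z}$ is increasing, the same scaling bound yields $g_j^{\#}(\zeta)=\varphi_{j,z}(\rho_j)\le(1-R\rho_j/r_j)^{-1-\alpha}\to 1$.
\end{itemize}
Your concluding Marty argument then finishes the proof.

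Also, the preliminary dilation must be $f\mapsto\lambda^{-\alpha}f(\lambda\,\cdot)$. Plain $f(\lambda\,\cdot)$ changes the limit by a constant factor and spoils $g^{\#}(0)=1$. The paper gives no proof of this lemma; it simply quotes Dovbush.
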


In \cite{Majumder-2026}, Majumder studied the relation between the maximum modulus and the spherical metric of a holomorphic function in $\mathbb{C}^n$ and obtained the following result.
\begin{lem}\cite{Majumder-2026}\label{ln2.3} Let $f$ be a holomorphic function in $\mathbb{C}^n$. If $f^{\#}$ is bounded on $\mathbb{C}^n$, then $\rho(f)\leq 1$.
\end{lem}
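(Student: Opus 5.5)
The plan is to reduce the statement to the classical one-variable theorem of Clunie and Hayman. That theorem says that an entire function $g$ on $\mathbb{C}$ with bounded spherical derivative satisfies $\log M(r,g)=O(r)$, where $M(r,g)=\max_{|w|=r}|g(w)|$. The proof would restrict $f$ to complex lines through the origin.

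First, fix a unit vector $\xi\in\mathbb{C}^n$ and set $g_\xi(w)=f(w\xi)$ for $w\in\mathbb{C}$. Then $g_\xi'(w)=\langle \nabla f(w\xi),\ol{\xi}\rangle$. By the Cauchy--Schwarz description of $f^{\#}$ recalled before Lemma \ref{ln2.1},
\[
g_\xi^{\#}(w)=\frac{|g_\xi'(w)|}{1+|g_\xi(w)|^2}\le \frac{\|\nabla f(w\xi)\|}{1+|f(w\xi)|^2}=f^{\#}(w\xi)\le M,
\]
where $M=\sup_{\mathbb{C}^n}f^{\#}$. So every slice $g_\xi$ has spherical derivative bounded by the same constant $M$, and all slices satisfy $g_\xi(0)=f(0)$.

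Second, apply Clunie--Hayman to each $g_\xi$. The point is to obtain a bound $\log^+ M(r,g_\xi)\le Ar+B$ with $A,B$ depending only on $M$ and $|f(0)|$, not on $\xi$. I expect this uniformity to be the main obstacle. The published statement only gives $O(r)$, so I would go back into its proof to check that the constants depend only on these two quantities. That proof runs as follows: a Lipschitz bound of constant $M$ from $\mathbb{C}$ into the Riemann sphere forces $g$ to stay in a fixed spherical neighbourhood of any value on discs of radius $\asymp 1/M$. This is combined with the Ahlfors--Shimizu bound $T_0(r,g)\le \tfrac12 M^2r^2$ and a counting/covering argument for the points where $|g|$ is small. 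Every step uses only the Lipschitz constant and the normalization at $0$, which are uniform in $\xi$ here.

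If that inspection is awkward, I would use a normal-families fallback instead. The family $\{g_\xi(w+w_0)\}$, over all $\xi$ and all $w_0$, has uniformly bounded spherical derivative, so it is normal by Lemma \ref{ln2.1}. A compactness argument in $\xi$ on the unit sphere, which is compact, then upgrades the pointwise-in-$\xi$ statement to a uniform one. Either route yields
\[
\log^+ \max_{\|z\|=r}|f(z)|=\sup_{\|\xi\|=1}\log^+|g_\xi(r)|\le Ar+B .
\]

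Finally, since $f$ is holomorphic, $N(r,f)=0$. Hence $T(r,f)=m(r,f)=\int_{\mathbb{C}^n\langle r\rangle}\log^+|f|\,\sigma\le \log^+\max_{\|z\|=r}|f(z)|\le Ar+B$, because $\sigma$ is a probability measure on the sphere. Therefore $\rho(f)=\limsup_{r\to\infty}\frac{\log T(r,f)}{\log r}\le 1$.
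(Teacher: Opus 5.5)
The paper gives no proof of Lemma \ref{ln2.3}; it is quoted from Majumder's earlier paper, so your argument has to stand on its own. Two of your steps are correct: the slicing step ($g_\xi^{\#}(w)\le f^{\#}(w\xi)\le M$ with $g_\xi(0)=f(0)$) and the final step ($T(r,f)=m(r,f)\le \log^+\max_{\|z\|=r}|f(z)|$). The step you flag as the crux is not established, and neither of your two routes delivers it: a bound $\log^+|g_\xi(w)|\le A|w|+B$ with $A,B$ independent of $\xi$.

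\emph{First route.} Your sketch of the Clunie--Hayman proof does not reach order $1$. The Ahlfors--Shimizu estimate $T_0(r,g)\le \frac12 M^2r^2$ only gives order at most $2$, which is sharp for meromorphic functions with bounded spherical derivative (e.g.\ $\wp$). The improvement to exponential type is the part that genuinely uses that $g$ is entire, and your sketch does not identify it. So the claim that ``every step is uniform'' is unverified.

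\emph{Fallback route.} This one fails as stated. Normality and compactness of the unit sphere give continuity of $\xi\mapsto g_\xi$ only for locally uniform convergence, and that topology carries no information about growth as $|w|\to\infty$. Exponential type is not upper semicontinuous for it: $1+e^{k(w-k)}\to 1$ locally uniformly, while the types are $k$. So a pointwise-in-$\xi$ $O(r)$ bound cannot be upgraded by compactness. A Baire category argument would give uniform constants only on an open set of directions. Spreading that to the whole sphere needs pluripotential input, e.g.\ a Bernstein--Walsh inequality for the homogeneous parts of $f$, which the proposal does not contain.

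\emph{How to close the gap.} The missing estimate can be proved directly, with constants depending only on $M$ and $|g(0)|$. Let $g$ be entire on $\mathbb{C}$ with $g^{\#}\le M$.

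On $\{|g|=1\}$ one has $|g'/g|=|g'|=2g^{\#}\le 2M$.

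On $G=\{|g|>1\}$, the function $\phi=g'/g$ is holomorphic and $u=\log|g|>0$ is harmonic. Normality of the translates $g(\cdot+w_0)$, together with Harnack's inequality and Liouville's theorem for positive harmonic functions, gives $|\phi|\le \varepsilon u+C_\varepsilon$ on $G$ for every $\varepsilon>0$. (Apply Harnack and Liouville to $u(w_k+\cdot)/u(w_k)$ along any sequence with $|g(w_k)|\to\infty$; if $|g(w_k)|$ stays bounded, normality alone bounds $\phi(w_k)$.)

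Hence on each component of $G$, the function $|\phi|-2M-\varepsilon u$ is subharmonic and bounded above (use the previous estimate with $\varepsilon/2$). It is $\le 0$ at every finite boundary point. That finite boundary is non-polar and $\{\infty\}$ is polar, so the extended maximum principle gives $|\phi|\le 2M+\varepsilon u$. Letting $\varepsilon\to0$ yields $|\nabla\log|g||=|\phi|\le 2M$ on $G$.

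Integrate along $[0,w]$, starting from its last point in $\{|g|\le 1\}$, or from $0$ if there is none. This gives $\log^+|g(w)|\le \log^+|g(0)|+2M|w|$. Applied to every slice $g_\xi$, it yields $\log^+|f(z)|\le \log^+|f(0)|+2M\|z\|$. Hence $T(r,f)\le 2Mr+O(1)$ and $\rho(f)\le 1$, which completes your plan.
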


\begin{lem}\label{ln2.4}\cite[Lemma 1.37]{Hu-Li-Yang-2003} Let $f$ be a non-constant meromorphic function in $\mathbb{C}^n$ and let $I=(i_1,\ldots,i_n)\in \mathbb{Z}^n_+$ be a multi-index. Then for any $\varepsilon>0$, we have
\begin{align*}
m\left(r,\frac{\partial^I(f)}{f}\right)\leq |I|\log^+T(r,f)+|I|(1+\varepsilon)\log^+\log T(r,f)+O(1),
\end{align*}
for all large $r$ outside a set $E$ with $\text{log mes}\;E=\int_E d\log r<\infty$.
\end{lem}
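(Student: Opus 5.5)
The argument I would give proceeds in two stages: first the case $|I|=1$, i.e.\ a logarithmic derivative lemma for a single partial derivative $\partial_{z_j}f$, and then an induction on $|I|$ by telescoping. This lemma is quoted from \cite[Lemma 1.37]{Hu-Li-Yang-2003}, so the plan reconstructs the standard argument rather than anything specific to this paper.

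\textbf{Stage one: the case $|I|=1$.} I would start from the Green--Jensen (Poisson--Jensen) representation of $\log|f|$ on the ball $\mathbb{C}^n[R]$ with $R>r$. This writes $\log|f(z)|$ as a Poisson integral of $\log|f|$ over $\mathbb{C}^n\langle R\rangle$ minus Green potentials of the divisors $\mu_f^0$ and $\mu_f^\infty$. Since $f_{z_j}/f=2\,\partial_{z_j}\log|f|$, I would differentiate this formula under the integral sign. This expresses $f_{z_j}/f$ through two kinds of terms: derivatives of the Poisson kernel integrated against $|\log|f||$, and derivatives of the Green kernel integrated against the divisors. The kernel derivatives are bounded by $C R/(R-\|z\|)^{2n}$ in the first case and by a locally integrable singularity in the second. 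Integrating $\log^+$ of these bounds against $\sigma$ on $\mathbb{C}^n\langle r\rangle$ and using concavity of $\log$ should give
\[
m\Big(r,\tfrac{f_{z_j}}{f}\Big)\le \log^+ T(R,f)+\log^+\tfrac{R}{R-r}+\log^+ R+O(1).
\]
Here $T(R,f)$ controls both $\int\bigl|\log|f|\bigr|\sigma$ and $N(R,0;f)+N(R,f)$, by the First Main Theorem.

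An alternative route for this stage is to slice by complex lines through $0$. Since $\sigma$ disintegrates over $\mathbb{P}^{n-1}$, the quantity $m(r,\cdot)$ is an average of one-variable proximity functions. The difficulty is that $\partial_{z_j}$ is not the radial derivative along the slice, which is why I prefer the kernel approach.

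\textbf{Calibration step, expected to be the main obstacle.} I would choose $R=r+ r/T(r,f)^{1+\varepsilon'}$ and then apply a Borel-type growth lemma. That lemma says $T\big(r+r/T(r,f)^{1+\varepsilon'},f\big)\le 2T(r,f)$ outside a set $E$ with $\int_E d\log r<\infty$. With this choice, $\log^+\frac{R}{R-r}=(1+\varepsilon')\log^+T(r,f)$, which is too crude. To obtain the sharp form $\log^+T+(1+\varepsilon)\log^+\log T$, I would instead pick $R-r$ of size $r/\big(T(r,f)(\log T(r,f))^{1+\varepsilon}\big)$. I would also keep track of the $\log^+R$ term, which is absorbed because $\log r=O(\log T(r,f))$ for transcendental $f$ (and the polynomial case is trivial). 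Balancing these constants precisely is the delicate point.

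\textbf{Stage two: induction on $|I|$.} Order a chain of multi-indices $0=I_0<I_1<\cdots<I_{|I|}=I$, each step adding $1$ in one coordinate. Then
\[
\frac{\partial^I f}{f}=\prod_{k=1}^{|I|}\frac{\partial^{I_k}f}{\partial^{I_{k-1}}f},
\]
so $m(r,\partial^If/f)\le\sum_k m\big(r,\partial^{I_k}f/\partial^{I_{k-1}}f\big)$. To each factor I apply stage one with $f$ replaced by $g_k=\partial^{I_{k-1}}f$. For this I need
\[
T(r,g_k)\le (|I_{k-1}|+1)\,T(r,f)+O(\log^+T(r,f))
\]
outside an exceptional set. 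This follows inductively from $N(r,\partial_{z_j}g)\le 2N(r,g)$ together with $m(r,\partial_{z_j}g)\le m(r,g)+m(r,\partial_{z_j}g/g)$. Hence $\log^+T(r,g_k)=\log^+T(r,f)+O(1)$ and $\log^+\log T(r,g_k)=\log^+\log T(r,f)+o(1)$. Summing the $|I|$ factors gives the stated bound. The exceptional set is a finite union of sets of finite logarithmic measure, so it again has finite logarithmic measure.
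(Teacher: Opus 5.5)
The paper gives no proof of this lemma; it is quoted from \cite{Hu-Li-Yang-2003}. Your Stage two is the standard reduction and is fine. If $\partial^I f\equiv 0$ there is nothing to prove; otherwise every $\partial^{I_{k-1}}f$ is non-constant. Whether you obtain $(|I_{k-1}|+1)T(r,f)$ (which needs $N(r,\partial_{z_j}g)\le N(r,g)+\ol N(r,f)$) or only $2^{|I_{k-1}|}T(r,f)$ from $N(r,\partial_{z_j}g)\le 2N(r,g)$ affects only the $O(1)$.

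The gap is in Stage one, where the constants are the whole content of the statement.

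(a) Your intermediate bound contains a $\log^+R$ term, and it cannot be absorbed. The claim $\log r=O(\log T(r,f))$ is false for every transcendental $f$ of order zero. Even when $T(r,f)\asymp r^{\rho}$, the extra term changes the coefficient of $\log^+T(r,f)$ from $1$ to $1+1/\rho$.

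(b) Your calibration double counts. With $R-r\approx r/\bigl(T(r,f)(\log T(r,f))^{1+\varepsilon}\bigr)$, the term $\log^+\frac{R}{R-r}$ is already $\log T(r,f)+(1+\varepsilon)\log\log T(r,f)+O(1)$. It is then added to $\log^+T(R,f)\ge\log^+T(r,f)$, so you end with coefficient $2$.

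(c) The method you describe would not give a single factor $\frac{1}{R-r}$ in any case. Bounding the differentiated Poisson kernel pointwise by something of order $(R-\|z\|)^{-2n}$ and then integrating $\log^+$ yields $2n\log^+\frac{1}{R-r}$, i.e.\ $2n(1+\varepsilon)\log^+\log T$. You get one factor $\frac{1}{R-r}$ only if you integrate the kernel and the Green-potential terms over $\mathbb{C}^n\langle r\rangle$ \emph{before} taking logarithms. That is exactly what Ye's estimate does, by working with $|\partial_{z_j}(f)/f|^{\alpha}$ for $0<\alpha<\frac12$.

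The missing step is already in the paper. Lemma \ref{ln2.7aa} together with the concavity Lemma \ref{ln2.7a} gives, as in (\ref{aa.2}),
\[
m\Bigl(r,\frac{\partial_{z_j}(f)}{f}\Bigr)\le\log^+\Bigl\{\Bigl(\frac{R}{r}\Bigr)^{2n-1}\frac{T(R,f)}{R-r}\Bigr\}+O(1),\qquad r_0<r<R.
\]
Take $R=r\bigl(1+(\log T(r,f))^{-1-\varepsilon}\bigr)$. Apply Borel's lemma to $u(s)=\log T(e^{s},f)$, not to $T$ itself: $u\bigl(s+u(s)^{-1-\varepsilon}\bigr)<u(s)+1$ outside a set of finite $ds$-measure. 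Hence $T(R,f)\le e\,T(r,f)$ outside a set of finite logarithmic measure. Since $\frac{1}{R-r}=\frac{(\log T(r,f))^{1+\varepsilon}}{r}$, for $r\ge 1$ this gives
\[
m\Bigl(r,\frac{\partial_{z_j}(f)}{f}\Bigr)\le\log^+\Bigl\{\frac{2^{2n-1}e\,T(r,f)(\log T(r,f))^{1+\varepsilon}}{r}\Bigr\}+O(1)\le\log^+T(r,f)+(1+\varepsilon)\log^+\log T(r,f)+O(1).
\]
Here the radius enters through the factor $1/r$, which only helps, so no $\log r$ term ever appears. Applying this sharp $|I|=1$ estimate to each $g_k$ in your Stage two completes the proof.
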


\begin{lem}\label{ln2.5}\cite[Lemma 1.2]{Hu-Yang-1996} Let $f$ be a non-constant meromorphic function in $\mathbb{C}^n$ and let $a_1,a_2,\ldots,a_q$ be different points in $\mathbb{C}\cup\{\infty\}$. Then
\begin{align*}
(q-2)T(r,f)\leq \sideset{}{_{j=1}^{q}}{\sum} \ol N(r,a_j;f)+O(\log (rT(r,f)))
\end{align*}
holds only outside a set of finite measure on $\mathbb{R}^+$.
\end{lem}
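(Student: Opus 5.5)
We would follow the classical Nevanlinna argument for the second main theorem. The only genuinely several-variable ingredients needed are the logarithmic derivative estimate of Lemma \ref{ln2.4} and pointwise inequalities between multiplicity divisors.

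\emph{Normalization.} First we reduce to the case where one of the points is $\infty$. If all $a_j$ are finite, we replace $f$ by $1/(f-a_1)$ and the $a_j$ by their images under this M\"obius map. By the First Main Theorem this changes $T(r,f)$ only by $O(1)$, and it leaves every reduced counting function $\ol N(r,a_j;f)$ unchanged. So assume $a_q=\infty$ and $a_1,\ldots,a_{q-1}\in\mathbb{C}$. Since $f$ is non-constant, some index $k$ has $D:=\partial_{z_k}$ with $Df\not\equiv 0$; we fix this $D$. Put $\delta=\min_{i\ne j}|a_i-a_j|$ and
\[
F=\sum_{j=1}^{q-1}\frac{1}{f-a_j}.
\]

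\emph{Proximity estimate.} On $\mathbb{C}^n\langle r\rangle$, at each point $|f-a_j|<\delta/(2q)$ holds for at most one $j$. The standard pointwise argument then gives
\[
\sum_{j=1}^{q-1} m(r,a_j;f)\le m(r,F)+O(1).
\]
Next we write $F=\frac{1}{Df}\sum_j \frac{Df}{f-a_j}$. Applying Lemma \ref{ln2.4} to each $f-a_j$ with $|I|=1$ yields
\[
m(r,F)\le m(r,1/Df)+S(r,f),
\]
where $S(r,f)=O(\log T(r,f)+\log\log T(r,f))$. The First Main Theorem gives $m(r,1/Df)=T(r,Df)-N(r,0;Df)+O(1)$. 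Finally,
\[
T(r,Df)\le m(r,f)+m(r,Df/f)+N(r,Df)=m(r,f)+N(r,Df)+S(r,f).
\]

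\emph{Combining.} Adding $m(r,f)$ to both sides and using the First Main Theorem for each $a_j$ gives
\[
(q-2)T(r,f)\le \sum_{j=1}^{q-1}N(r,a_j;f)+N(r,f)-\big(2N(r,f)-N(r,Df)+N(r,0;Df)\big)+S(r,f).
\]

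\emph{Divisor inequalities (the main obstacle).} It remains to show that the bracketed ramification term absorbs the multiplicities. Concretely, we need, outside an analytic set of codimension $\ge 2$ (which carries no $(2n-2)$-mass and hence does not affect the counting functions), the two pointwise inequalities
\[
\mu^0_{Df}\ge \mu^{a_j}_f-1 \text{ on } \operatorname{supp}\mu^{a_j}_f, \qquad 2\mu^\infty_f-\mu^\infty_{Df}\ge \mu^\infty_f-1 .
\]
The first follows by writing $f-a_j=h^m u$ locally at a regular point of the zero set. The second follows by writing $f=g/h^m$ near a regular point of the polar set, where $D f$ has pole order at most $m+1$. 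These give
\[
N(r,a_j;f)-N_{0,j}(r)\le \ol N(r,a_j;f), \qquad N(r,f)-\big(2N(r,f)-N(r,Df)\big)\le \ol N(r,f),
\]
where $N_{0,j}$ counts the zeros of $Df$ on the $a_j$-set. Substituting these bounds yields the inequality with $\ol N$.

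The care point is making these local decompositions rigorous for divisors in $\mathbb{C}^n$, where the zero sets are hypersurfaces rather than discrete points. We would handle this by working at smooth points of $\operatorname{supp}\nu$ and invoking Stoll's definition of multiplicity.

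\emph{Exceptional set.} Lemma \ref{ln2.4} only gives an exceptional set of finite logarithmic measure. To obtain an exceptional set of finite linear measure with error $O(\log(rT(r,f)))$, we would use the variant of the logarithmic derivative lemma in which the Borel growth lemma is applied with the $\log r$ term kept. This absorbs the $\log r$ into the error and makes the exceptional set have finite Lebesgue measure, exactly as in \cite{Hu-Yang-1996}.
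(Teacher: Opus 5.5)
The paper gives no proof of this lemma; it is quoted from Hu--Yang, so there is no internal argument to compare yours against. What you propose is the standard derivation of the second main theorem in $\mathbb{C}^n$: the one-variable Nevanlinna bookkeeping, with a single partial derivative $\partial_{z_k}$ (chosen so that $\partial_{z_k}f\not\equiv 0$) in place of $f'$. As far as I can check, it is correct.

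The steps that check are the chain $\sum_{j<q} m(r,a_j;f)\le m(r,F)+O(1)\le m(r,1/\partial_{z_k}f)+S(r,f)$, the First Main Theorem for $\partial_{z_k}f$, and the final rearrangement. Your two local divisor estimates also hold at generic points of each irreducible component, and that is all the counting functions see.

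Two bookkeeping facts should be stated explicitly:
\begin{enumerate}
\item[(a)] For distinct finite $a_i,a_j$, the divisors $\mu^{a_i}_f$ and $\mu^{a_j}_f$ share no irreducible component. This is what gives $\sum_j N_{0,j}(r)\le N(r,0;\partial_{z_k}f)$.
\item[(b)] The pole estimate should be read as $\mu^{\infty}_{\partial_{z_k}f}\le \mu^{\infty}_f+\min\{\mu^{\infty}_f,1\}$ off a set of codimension at least two; both sides vanish off the polar set. This is exactly what yields $N(r,f)-\bigl(2N(r,f)-N(r,\partial_{z_k}f)\bigr)\le \overline{N}(r,f)$.
\end{enumerate}

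The only step left as a sketch is the exceptional set, and you were right to flag it. Lemma \ref{ln2.4} gives an exceptional set of finite \emph{logarithmic} measure, which need not have finite linear measure, so it does not give the statement as written.

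You can close this with tools already in the paper. As in the proof of Lemma \ref{ln2.8}, Lemmas \ref{ln2.7a} and \ref{ln2.7aa} give, for $g=f-a_j$,
\[
m\Bigl(r,\frac{\partial_{z_k}g}{g}\Bigr)\le \log^+\Bigl\{\Bigl(\frac{R}{r}\Bigr)^{2n-1}\frac{T(R,g)}{R-r}\Bigr\}+O(1),\qquad R>r>r_0 .
\]
Take $R=r+1/T(r,f)$, so that $(R/r)^{2n-1}$ is bounded. Borel's lemma, applied to the nondecreasing characteristic, gives $T(R,f)\le 2T(r,f)+O(1)$ outside a set of finite linear measure. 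Hence $m(r,\partial_{z_k}(f-a_j)/(f-a_j))\le 2\log^+T(r,f)+O(1)$ there, which is even stronger than the required $O(\log(rT(r,f)))$.

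As a side remark, in the paper's only use of this lemma (the proof of Lemma \ref{ln2.7}), the function has order at most $1$. There Lemma \ref{ln2.8} already bounds the error terms by $o(\log r)$ for all $r$, so no exceptional set arises at all.
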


\begin{lem}\label{ln2.6}\cite[Lemma 3.59]{Hu-Li-Yang-2003} Let $P$ be a non-constant entire function in $\mathbb{C}^n$. Then 
\begin{align*}
\rho(e^P)=
\begin{cases}
\deg(P), & \text{if $P$ is a polynomial,}\\
+\infty, & \text{otherwise.}
\end{cases}
\end{align*}
\end{lem}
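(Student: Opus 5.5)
The plan is to compute $T(r,e^P)$ directly. Since $e^P$ is entire and zero-free, $N(r,e^P)=0$ and
\[
T(r,e^P)=m(r,e^P)=\int_{\mathbb{C}^n\langle r\rangle}(\operatorname{Re}P)^+\,\sigma .
\]
Two properties of $\sigma$ on $\mathbb{C}^n\langle r\rangle$ will be used. It is the normalized unitarily invariant probability measure there, and in particular it is invariant under the circle action $z\mapsto e^{i\theta}z$. Moreover, $\operatorname{Re}P$ is pluriharmonic, so the mean value property gives $\int_{\mathbb{C}^n\langle r\rangle}\operatorname{Re}P\,\sigma=\operatorname{Re}P(0)$. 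Writing $|x|=2x^+-x$, this yields
\[
\int_{\mathbb{C}^n\langle r\rangle}|\operatorname{Re}P|\,\sigma=2T(r,e^P)-\operatorname{Re}P(0).
\]
The whole proof then consists of comparing $\int|\operatorname{Re}P|\,\sigma$ with the homogeneous components of $P$.

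Expand $P=\sum_{k\ge 0}P_k$ into homogeneous polynomials. This series converges uniformly on each sphere.

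\textbf{Upper bound.} If $P$ is a polynomial of degree $d$, then $|P(z)|\le C(1+r)^d$ on $\|z\|=r$. Hence $T(r,e^P)\le C(1+r)^d$, and so $\rho(e^P)\le d$.

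\textbf{Lower bound.} The key step is an orthogonality relation. Fix $k\ge1$ and write $z=r\xi$ with $\|\xi\|=1$. Using $\operatorname{Re}P=\tfrac12(P+\overline P)$, I integrate $\operatorname{Re}P\cdot\overline{P_k(\xi)}$ against $\sigma$ over $\mathbb{C}^n\langle r\rangle$. Averaging over the circle action $\xi\mapsto e^{i\theta}\xi$ shows two things. First, $\int P_j(r\xi)\overline{P_k(\xi)}\,\sigma=0$ for $j\ne k$, because the integrand picks up the factor $e^{i(j-k)\theta}$. Second, $\int \overline{P_j(r\xi)}\,\overline{P_k(\xi)}\,\sigma=0$ for $j+k>0$, because of the factor $e^{-i(j+k)\theta}$. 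Therefore
\[
\int_{\mathbb{C}^n\langle r\rangle}\operatorname{Re}P\cdot\overline{P_k(\xi)}\,\sigma=\tfrac12 r^k\|P_k\|^2_{L^2(\sigma_1)},
\]
where $\sigma_1$ denotes $\sigma$ on the unit sphere. Bounding $|P_k(\xi)|\le M_k:=\max_{\|\xi\|=1}|P_k|$ then gives
\[
\tfrac12 r^k\|P_k\|^2_{L^2(\sigma_1)}\le M_k\bigl(2T(r,e^P)-\operatorname{Re}P(0)\bigr).
\]
Consequently, whenever $P_k\not\equiv0$ we have $T(r,e^P)\ge c_k r^k-O(1)$ with $c_k>0$, so $\rho(e^P)\ge k$.

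\textbf{Conclusion.} If $P$ is a polynomial of degree $d\ge1$, apply the lower bound with $k=d$; combined with the upper bound this gives $\rho(e^P)=d$. If $P$ is not a polynomial, then $P_k\not\equiv0$ for infinitely many $k$, so $\rho(e^P)\ge k$ for arbitrarily large $k$, and hence $\rho(e^P)=+\infty$.

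The main point needing care is the orthogonality step. It requires justifying that $\sigma$ restricted to $\mathbb{C}^n\langle r\rangle$ is the rotation-invariant probability measure, which is standard for $\sigma=d^c\log\tau\wedge\omega^{n-1}$. It also requires exchanging the sum over $j$ with the integral, which is justified by the uniform convergence of the homogeneous expansion on compact sets.
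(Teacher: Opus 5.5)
Your proof is correct and complete. Note that the paper gives no argument for this lemma; it simply imports it from Hu--Li--Yang [Lemma 3.59], so there is no internal proof to match against.

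The key steps all check out:
\begin{enumerate}
\item The identity $\int_{\mathbb{C}^n\langle r\rangle}|\operatorname{Re}P|\,\sigma=2T(r,e^P)-\operatorname{Re}P(0)$ holds exactly. This is because $N(r,e^P)=0$ and $\operatorname{Re}P$ is harmonic on $\mathbb{R}^{2n}$.
\item The circle average removes every term except $P_k\overline{P_k}$. The antiholomorphic terms vanish precisely because $k\ge 1$.
\item The constant $c_k$ is positive when $P_k\not\equiv 0$. A nonzero homogeneous polynomial cannot vanish identically on the unit sphere, and $\sigma$ has full support there.
\end{enumerate}

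A more common route to this fact goes through the maximum modulus. One passes from $T(r,e^P)$ to $\log M(r,e^P)=\max_{\|z\|=r}\operatorname{Re}P$ via $T(r,f)\le\log^+M(r,f)\le C_\theta\,T(r/\theta,f)$. One then applies Borel--Carath\'eodory on complex lines through the origin, together with Cauchy estimates, to force $P$ to be a polynomial of controlled degree.

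Your approach stays entirely at the level of the characteristic function. It uses only the rotation invariance of $\sigma$ and the orthogonality of homogeneous components. It therefore avoids both the Poisson-kernel comparison and Borel--Carath\'eodory. It also produces the explicit bound $T(r,e^P)\ge c_k r^k-O(1)$ for every nonzero $P_k$. That quantitative form is exactly what the paper needs later, for instance when it concludes from an order bound $\le 1$ that the exponents $\alpha_i$ are affine.
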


Let $f$ and $g$ be two non-constant holomorphic functions in a domain $D\subset \mathbb{C}^n$ and let $a\in\mathbb{C}$. If $g(z)=a$ whenever $f(z)=a$, we write $f= a\Rightarrow g=a$. If $f=a\Rightarrow g=a$ and $g=a\Rightarrow f=a$, we write $f=a \Leftrightarrow g=a$.

\begin{lem}\label{ln2.7} Let $\mathcal{F}$ be a family of holomorphic functions on a domain $\Omega\subset \mathbb{C}^n$ and let $a$, $b$ and $c$ be three distinct finite complex numbers. For each $f\in \mathcal{F}$, if $E(S,f)=E(S,\partial_{z_i}(f))$ for $i=1,2,\ldots,n$, then $\mathcal{F}$ is normal in $\Omega$.
\end{lem}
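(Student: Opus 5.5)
We argue by contradiction, combining Lemma \ref{ln2.2} with Lemma \ref{ln2.1}. Since normality is a local property, it suffices to show that $\mathcal{F}$ is normal at each point $z_0\in\Omega$. Suppose $\mathcal{F}$ is not normal at some $z_0$. Lemma \ref{ln2.2}, applied with $\alpha=0$, then gives $f_j\in\mathcal{F}$, points $z_j\to z_0$ and radii $\rho_j\to0$ such that $g_j(z)=f_j(z_j+\rho_j z)$ converges locally uniformly on $\mathbb{C}^n$ to a non-constant entire function $g$ with $g^{\#}(z)\le g^{\#}(0)=1$. By Lemma \ref{ln2.3}, $\rho(g)\le 1$.

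The first step is to transfer the sharing hypothesis to $g$. By Hurwitz's theorem in several variables (applied to $g-a$ restricted to complex lines, or via the zero set of $g-a$), the hypothesis $E(S,f_j)=E(S,\partial_{z_i}f_j)$ has two consequences. If $g(\zeta_0)\in S$, then near $\zeta_0$ there are points $\zeta_j$ with $f_j(z_j+\rho_j\zeta_j)\in S$, and hence $\partial_{z_i}f_j(z_j+\rho_j\zeta_j)\in S$. Now $\partial_{z_i}g_j(\zeta_j)=\rho_j\,\partial_{z_i}f_j(z_j+\rho_j\zeta_j)$, and $S$ is bounded, so $\partial_{z_i}g(\zeta_0)=\lim\partial_{z_i}g_j(\zeta_j)=0$ for every $i$. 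In short, $g\in S\Rightarrow\nabla g=0$, so every $S$-point of $g$ is a critical point. The case $g\equiv$ constant on the relevant set is excluded because $g$ is non-constant and its zero divisors are proper analytic sets.

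The second step shows that this forces $g$ to omit values, which contradicts the second main theorem. Choose a complex line $L$ through a point where $g$ is non-constant along $L$, with $L$ generic so that $g|_L$ is a non-constant entire function of one variable of order at most $1$. Every $a$-point of $g|_L$ for $a\in S$ is then multiple, since the directional derivative vanishes there. Alternatively, one can stay in $\mathbb{C}^n$: each $a$-point of $g$ has multiplicity at least $2$, so $\overline N(r,a;g)\le \frac12 N(r,a;g)\le \frac12 T(r,g)+O(1)$. Lemma \ref{ln2.5} with the four values $a,b,c,\infty$, where $\overline N(r,g)=0$, then gives
\[
2T(r,g)\le \tfrac32 T(r,g)+O(\log(rT(r,g))),
\]
that is, $T(r,g)=O(\log r)$ outside a set of finite measure. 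Hence $g$ is a polynomial. A non-constant polynomial in $\mathbb{C}^n$ with $g^{\#}$ bounded must have degree at most $1$: restrict to a line and note that $|g'|/(1+|g|^2)$ is bounded only if $\deg\le1$, or observe directly that higher-degree terms make $\|\nabla g\|/(1+|g|^2)$ unbounded along a suitable curve. So $g$ is affine, $g(z)=\langle\text{const}\rangle+\sum \beta_k z_k$ with $\nabla g=\beta\ne0$. But an affine non-constant $g$ attains the value $a$, and there $\nabla g=\beta\neq0$, a contradiction. (For the polynomial conclusion one can also bypass Lemma \ref{ln2.5} by using the multiple-point counting directly on a line.)

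The main obstacle is the first step, the Hurwitz argument in several variables. We need that if $g(\zeta_0)=a$ with $g\not\equiv a$, then the $a$-sets of $g_j$ accumulate at $\zeta_0$. We handle this by restricting to a complex line $\ell$ through $\zeta_0$ on which $g|_\ell\not\equiv a$; such a line exists because $g$ is non-constant. One-variable Hurwitz on $\ell$ then yields the points $\zeta_j\in\ell$, and the derivative estimate above uses the full gradient at those points. The counting in the second step must also be justified: it relies on the ramification bound $\mu^a_g\ge2$ wherever $\mu^a_g\ge1$, which is exactly what $\nabla g=0$ on $g^{-1}(a)$ provides.
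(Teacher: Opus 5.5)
Your first step (rescaling with $\alpha=0$, Hurwitz along a complex line, and the bound $|\partial_{z_i}f_j|\le\max\{|a|,|b|,|c|\}$ at $S$-points, giving $\nabla g=0$ on $g^{-1}(S)$) is correct and is essentially the paper's argument. Using the full gradient to get $\mu^w_g\ge 2$, hence $\overline N(r,w;g)\le\frac12 T(r,g)+O(1)$, is a legitimate substitute for the paper's bound through $N(r,0;\partial_{\zeta_k}F)$. Your deduction that $g$ is a polynomial is also fine.

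The gap is the next step: it is false that a non-constant polynomial with bounded spherical derivative must be affine. In one variable every polynomial satisfies $g^{\#}(z)\to 0$ as $|z|\to\infty$, so $g^{\#}$ is bounded. In $\mathbb{C}^n$ the polynomial $g(z)=z_1^2$ has $g^{\#}(z)=2|z_1|/(1+|z_1|^4)$, which is again bounded. After an affine change of variables such polynomials even meet the normalization $g^{\#}\le g^{\#}(0)=1$ of Lemma \ref{ln2.2}, so nothing excludes them. Your closing contradiction therefore only disposes of $\deg g=1$.

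The repair is a direct count on a line. Choose a complex line $L$ on which $h=g|_L$ is a polynomial of degree $d\ge 1$. Since $\nabla g=0$ on $g^{-1}(S)$, every zero of $h-w$ $(w\in S)$ has some multiplicity $m\ge 2$ and is a zero of $h'$ of order exactly $m-1$. Thus $h-w$ has at most $d/2$ distinct zeros, and since the three zero sets are disjoint,
\[
d-1=\deg h'\ \ge\ \sum_{w\in S}\bigl(d-\#h^{-1}(w)\bigr)\ \ge\ 3d-\tfrac{3d}{2},
\]
which is impossible.

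Alternatively, you can finish the ``line'' option you started and then abandoned. By the one-variable second main theorem, a non-constant entire function cannot have three finite totally ramified values, and this handles polynomial and transcendental restrictions simultaneously.

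Note that the paper's own chain of inequalities writes the error term of Lemma \ref{ln2.5} as $o(T(r,F))$, which tacitly presumes $F$ is transcendental. So the polynomial case is precisely where either version needs this extra count.
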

\begin{proof}
Since normality is a local property, it is enough to show that $\mathcal{F}$ is normal at each point $z_0\in \Omega$.
Suppose on the contrary that $\mathcal{F}$ is not normal at $z_0\in\Omega$. 
Then by Lemma \ref{ln2.2}, there exist a sequence of functions $f_j\in\mathcal{F}$, a sequence $\{z_j\}\subset\Omega$ with $z_j\rightarrow z_0$ and a sequence of positive numbers $\{\rho_j\}$ with $\rho_j\rightarrow 0$ such that
\bea\label{rj1.1} F_j(\zeta)=f_j(z_j+\rho_j \zeta)\rightarrow F(\zeta),\eea
locally uniformly in $\mathbb{C}^n$ to a non-constant holomorphic function $F$ such that $F^{\#}(\zeta)\leq F^{\#}(0)=1$, $\forall$ $\zeta\in\mathbb{C}^n$. Since $F^{\#}(\zeta)\leq 1$ for all $\zeta\in\mathbb{C}^n$, by Lemma \ref{ln2.3}, we get $\rho(F)\leq 1$. Also (\ref{rj1.1}) gives
\bea\label{rj1.2} F(\zeta)-w=\lim\limits_{j\rightarrow \infty}\left(F_j(\zeta)-a\right)=\lim\limits_{j\rightarrow \infty} \left(f_j(z_j+\rho_j \zeta)-w\right),\eea
locally uniformly in $\mathbb{C}^n$, where $w\in S$ and
\bea\label{rj1.3} \partial_{\zeta_i}(F_j(\zeta))=\rho_j\partial_{\zeta_i}(f_j(z_j+\rho_j \zeta))\rightarrow \partial_{\zeta_i}(F(\zeta)),\eea
locally uniformly in $\mathbb{C}^n$, where $i\in\{1,2,\ldots,n\}$. Note that
\bea\label{xx} F^{\#}(\zeta)=\frac{\sqrt{\sum\limits_{j=1}^n|\partial_{\zeta_j}(F(\zeta))|^2}}{1+|F(\zeta)|^2},\;\;\zeta\in\mathbb{C}^n\eea
 and $F^{\#}(\zeta)\leq F^{\#}(0)=1$ for all $\zeta\in\mathbb{C}^n$. Consequently from (\ref{xx}), it is easy to conclude that $\partial_{\zeta_k}(F(\zeta))\not\equiv 0$ for atleast one $k\in\{1,2,\ldots,n\}$.

\smallskip
If $F\neq a$, then obviously $F=a\Rightarrow \partial_{\zeta_k}(F)=0$. Next let $F(\zeta_0)=a$. Then applying Hurwitz's theorem (see \cite{PVD3}) to (\ref{rj1.2}), we get a sequence $\{\zeta_j\}$ in $\mathbb{C}^n$ such that $\zeta_j\rightarrow \zeta_0$ and $F_j(\zeta_j)=f_j(z_j+\rho_j \zeta_j)=a$. Since $E(S,f_j(z))=E(S,\partial_{\zeta_k}(f_j(z)))$ for all $j$, we have 
\[|\partial_{\zeta_k}(f_j(z))|\leq \max\{|a|,|b|,|c|\}=M_1\]
for all $j$. Then from (\ref{rj1.3}), we have
\[|\partial_{\zeta_k}(F(\zeta_0))|=\lim\limits_{j\rightarrow \infty}|\partial_{\zeta_k}(F_j(\zeta_j))|\leq \lim\limits_{j\rightarrow \infty} \rho_j M_1=0,\]
which shows that $\partial_{\zeta_k}(F(\zeta_0))=0$.
Hence $F=a\Rightarrow \partial_{\zeta_k}(F)=0$. Similarly, $F=b\Rightarrow \partial_{\zeta_k}(F)=0$ and $F=c\Rightarrow \partial_{\zeta_k}(F)=0$. 

\smallskip
Now in view of the first main theorem and using Lemmas \ref{ln2.4} and \ref{ln2.5}, we get
\begin{align*}2 T(r,F)&\leq \ol N(r,a;F)+\ol N(r,b;F)+\ol N(r,c;F)+o(T(r,F))\\
&\leq N(r,0;\partial_{\zeta_k}(F))+o(T(r,F))\\
&\leq T(r,\partial_{\zeta_k}(F))+o(T(r,F))\\
&\leq m(r,\partial_{\zeta_k}(F))+o(T(r,F))\\
&\leq m(r,F)+o(T(r,F))\\
&\leq T(r,F)+o(T(r,F)),
\end{align*}
which is impossible.
Hence $\mathcal{F}$ is normal at $z_0$. Consequently $\mathcal{F}$ is normal on $\Omega$. 
This completes the proof.
\end{proof}

Here, conclusion of this lemma does not hold when the set contains only two elements. The following example illustrate this fact:
\begin{exm}
Let us choose the set $S=\{1, -1\}$ and family of holomorphic functions
\begin{align*}
\mathcal{F}=\left\lbrace f_j(z): \ f_j(z)=\frac{j+1}{2j}e^{j(z_1+z_2+\cdots+z_n)}+\frac{j-1}{2j}e^{-j\left(z_1+z_2+\cdots+z_n\right)}, \ j=2, 3, \cdots\right\rbrace
\end{align*}
on $\Omega=\{z\in\mathbb{C}^n: \|z\|<1\}$. Now, for any $f_j\in \mathcal{F}$, we have 
\begin{align*}
j^2\left(f_j^2(z)-1\right) =\left(\frac{\partial f_j(z)}{\partial z_i}\right)^2-1.
\end{align*}

This shows that $E(S,f_j)=E\left(S,\partial_{z_i}(f_j)\right)$ for $i=1,2,\ldots,n$ and $j=2,3,\ldots$, but the family $\mathcal{F}$ is not a normal family.
\end{exm}

For the concavity of logarithmic function, we have the following lemma.
\begin{lem}\label{ln2.7a}\cite[Lemma 1.32]{Hu-Li-Yang-2003} Take $r>0$. Let $h$ be a non-negative function on $\mathbb{C}^n(r)$ such that $\log^+ h$ is integrable over $\mathbb{C}^n(r)$. Then 
\[\mathbb{C}^n[r;\log^+h]\leq \log^+(\mathbb{C}^n[r;h])+\log 2.\]
\end{lem}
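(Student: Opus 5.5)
This is a direct consequence of Jensen's inequality, once $\log^+$ is compared with the concave function $\log(1+x)$. I read $\mathbb{C}^n[r;\varphi]$ as the mean value of $\varphi$ with respect to the normalized (probability) measure on $\mathbb{C}^n(r)$ used in \cite{Hu-Li-Yang-2003}. Only two facts about this measure matter: it has total mass $1$, and it is positive.

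\emph{Step 1 (comparison).} For every $x\geq 0$ we have
\[
\log^+ x\leq \log(1+x)\leq \log\bigl(2\max\{1,x\}\bigr)=\log^+x+\log 2 .
\]
The first inequality holds because $1+x\geq \max\{1,x\}$. The second holds because $1+x\leq 2\max\{1,x\}$.

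\emph{Step 2 (reduction).} If $\mathbb{C}^n[r;h]=+\infty$, the right-hand side is $+\infty$ and there is nothing to prove. So assume $\mathbb{C}^n[r;h]<\infty$, i.e.\ $h$ is integrable. Applying the first inequality of Step 1 pointwise to $h$ and integrating against the positive measure gives
\[
\mathbb{C}^n[r;\log^+h]\leq \mathbb{C}^n[r;\log(1+h)].
\]

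\emph{Step 3 (Jensen).} The function $\varphi(x)=\log(1+x)$ is concave on $[0,\infty)$. The measure has total mass $1$ and $h\geq 0$ is integrable. Jensen's inequality therefore yields
\[
\mathbb{C}^n[r;\log(1+h)]\leq \log\bigl(1+\mathbb{C}^n[r;h]\bigr).
\]
Applying the second inequality of Step 1 with $x=\mathbb{C}^n[r;h]$ bounds the right-hand side by $\log^+(\mathbb{C}^n[r;h])+\log 2$. Chaining Steps 2 and 3 gives the claimed inequality.

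\emph{Where care is needed.} The only point requiring attention is measure-theoretic: Jensen must be applied to a probability measure. If $\mathbb{C}^n[r;\cdot]$ were an unnormalized integral, the constant $\log 2$ would have to absorb a volume factor. So the first thing I would check is that the paper's notation denotes the averaged integral. The integrability hypothesis on $\log^+h$ only guarantees that the left-hand side is finite and well defined; the argument itself needs no further regularity.
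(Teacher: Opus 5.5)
Your argument is correct and is essentially the standard proof. The paper gives no proof of its own and simply cites Hu--Li--Yang, where the inequality follows from $\log^+x\le\log(1+x)\le\log^+x+\log 2$ combined with Jensen's inequality for the concave function $\log(1+x)$ against a normalized measure. Your normalization caveat is the right one to raise, and since you use only that the measure has total mass one, your proof covers both the ball average $\mathbb{C}^n[r;\cdot]$ in the statement and the sphere average $\mathbb{C}^n\langle r;\cdot\rangle$ with respect to $\sigma$, which is the form actually used in Lemma~\ref{ln2.8}.
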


In 1995, Ye \cite{Ye-1995} obtained the following result.
\begin{lem}\cite[Lemma 4]{Ye-1995} \label{ln2.7aa} Let $f$ be a non-constant meromorphic function in $\mathbb{C}^n$. Then for any $0<\alpha<\frac{1}{2}$, there is a constant $C>1$ such that for any $r_0<r<R$ and any $j\in\{1,2,\ldots,n\}$ we have 
\[\mathbb{C}^n\bigg\langle r;\bigg|\frac{\partial_{z_j}(f)}{f}\bigg|^{\alpha}\bigg\rangle\leq C\left\{\left(\frac{R}{r}\right)^{2n-1}\frac{T(R,f)}{R-r}\right\}^{\alpha}.\]
\end{lem}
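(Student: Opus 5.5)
The plan is to follow the classical one-variable argument of Gol'dberg–Grinshtein (as adapted by Ye), replacing the Poisson–Jensen formula on a disc by its analogue on the ball $\mathbb{C}^n(R)$. Write the divisor of $f$ as $\nu=\mu_f^0-\mu_f^\infty$ and let $\Delta$ denote the associated positive measure $|\nu|\,\upsilon^{n-1}$ on $\operatorname{supp}\nu$. For $\|z\|<R$ with $f(z)\neq0,\infty$, the Green–Poisson representation on the ball gives
\[
\log|f(z)|=\int_{\mathbb{C}^n\langle R\rangle}P_R(z,\xi)\log|f(\xi)|\,d\sigma_R(\xi)-\int_{\mathbb{C}^n(R)}G_R(z,\xi)\,d\nu(\xi),
\]
where $P_R$ is the Poisson kernel and $G_R$ the Green function of the ball (with $d\nu$ understood as the signed measure $\nu\,\upsilon^{n-1}$). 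Since $\partial_{z_j}f/f=2\,\partial_{z_j}\log|f|$, differentiating under the integral yields
\[
\frac{\partial_{z_j}f}{f}(z)=2\int P^{(j)}_R(z,\xi)\log|f(\xi)|\,d\sigma_R(\xi)-2\int G^{(j)}_R(z,\xi)\,d\nu(\xi),
\]
with $P^{(j)}_R=\partial_{z_j}P_R$ and $G^{(j)}_R=\partial_{z_j}G_R$. Because $0<\alpha<1$, we have $|x+y|^\alpha\le|x|^\alpha+|y|^\alpha$, so it suffices to bound the $\sigma$-average over $\mathbb{C}^n\langle r\rangle$ of the $\alpha$-th power of each term separately.

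\emph{Boundary (Poisson) term.} For $\|z\|=r<R=\|\xi\|$ one has the kernel estimate
\[
|\partial_{z_j}P_R(z,\xi)|\le C\,\frac{R^{2n-1}}{\|z-\xi\|^{2n}}\le C\left(\frac{R}{r}\right)^{2n-1}\frac{1}{R-r}\cdot(\text{normalising factors}),
\]
after rescaling $\sigma$ to a probability measure. Combined with
\[
\int_{\mathbb{C}^n\langle R\rangle}\big|\log|f|\big|\,\sigma\le 2T(R,f)+O(1),
\]
which follows from the First Main Theorem, this term is pointwise $\le C(R/r)^{2n-1}T(R,f)/(R-r)$. Raising to the power $\alpha$ gives the desired shape directly; Lemma \ref{ln2.7a} can be used here if one prefers to work with averaged $\log^+$ quantities.

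\emph{Divisor (Green) term.} This is the main obstacle, since $G^{(j)}_R(z,\xi)$ behaves like $\|z-\xi\|^{-(2n-1)}$ near the diagonal, and this is not integrable on the $(2n-1)$-dimensional sphere. I would handle it by normalising $\Delta$ on $\mathbb{C}^n(R)$. Let $M=\Delta(\mathbb{C}^n(R))$. By Jensen's inequality for the concave function $t^\alpha$,
\[
\left|\int G^{(j)}_R\,d\nu\right|^\alpha\le M^{\alpha-1}\int|G^{(j)}_R(z,\xi)|^\alpha\,d\Delta(\xi).
\]
Integrating in $z$ over $\mathbb{C}^n\langle r\rangle$ and applying Fubini gives the bound
\[
M^\alpha\sup_{\xi}\int_{\mathbb{C}^n\langle r\rangle}|G^{(j)}_R(z,\xi)|^\alpha\,\sigma(z).
\]
The singularity is now $\|z-\xi\|^{-(2n-1)\alpha}$, which is integrable on the sphere because $\alpha<1$. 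The restriction $\alpha<1/2$ leaves room so that the bound is uniform in $\xi$, including points off the sphere, and also handles the part of $G_R$ coming from the reflected point. Finally, bound the mass $M$ by counting functions: $M\le C R^{2n-2}\big(n(R,0;f)+n(R,\infty;f)\big)$, and $n(t)$ is controlled by $N(R')$ for $t<R'$ through the usual $\int_t^{R'}n(s)\,ds/s$ argument, hence by $T(R',f)$. Choosing $R'$ between $r$ and $R$ (say the midpoint) and adjusting the constant yields a bound of the form $C\{(R/r)^{2n-1}T(R,f)/(R-r)\}^\alpha$.

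Adding the two estimates proves the lemma, with $C$ depending only on $n$ and $\alpha$. The technical points to verify carefully are the explicit derivative bounds for the ball's Poisson and Green kernels, and the uniform integrability of $|G^{(j)}_R|^\alpha$ on spheres. This uniform integrability is where I expect most of the work, and it is where the hypothesis on $\alpha$ enters.
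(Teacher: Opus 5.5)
The paper gives no proof of this lemma; it is quoted from Ye (1995). Your overall architecture is sound: the Green--Poisson representation of $\log|f|$ on a ball, subadditivity of $t^\alpha$, Jensen plus Fubini for the divisor term, and control of the divisor mass by $T$. However, the boundary (Poisson) step is false as written, and the radii need fixing.

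\textbf{The Poisson step.} Your inequality $C R^{2n-1}\|z-\xi\|^{-2n}\le C(R/r)^{2n-1}(R-r)^{-1}$ fails when $R-r<r$. At $z=r\eta$, $\xi=R\eta$ with $\|\eta\|=1$, the left side is $CR^{2n-1}(R-r)^{-2n}$. This exceeds the right side by the factor $(r/(R-r))^{2n-1}$, which is unbounded as $R\downarrow r$. Pointwise you only get $O\big(R^{2n-1}T(R,f)/(R-r)^{2n}\big)$ for the Poisson part of $\partial_{z_j}f/f$. Your bound holds when $R-r\ge r$, which covers the only use in Lemma~\ref{ln2.8} ($R=2r$), but the lemma is stated for all $r_0<r<R$.

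The repair is to average before estimating, as you already do for the divisor term. By concavity, $\mathbb{C}^n\langle r;|X|^\alpha\rangle\le\mathbb{C}^n\langle r;|X|\rangle^\alpha$. After Fubini you need $\int_{\mathbb{C}^n\langle r\rangle}\|z-\xi\|^{-2n}\sigma(z)$ for $\|\xi\|=R'$. By rotational symmetry this equals the Poisson normalisation $1/\bigl(R'^{2n-2}(R'^2-r^2)\bigr)$. Hence the boundary term contributes at most $\{C\,T(R,f)/(R-r)\}^\alpha$, and no factor $(R/r)^{2n-1}$ arises there.

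\textbf{The radii.} With the representation on $\mathbb{C}^n(R)$, as you wrote it, $M$ is the divisor mass in $\mathbb{C}^n(R)$, which $T(R,f)$ does not control. Take the whole representation on $\mathbb{C}^n(R')$ with $R'=(r+R)/2$. Lelong's monotonicity of $n_\nu(t)$ then gives $n(R')\log(R/R')\le N(R)-N(R')$. Since $\log(R/R')\ge (R-r)/(2R)$, this yields $M\le CR^{2n-1}(T(R,f)+O(1))/(R-r)$.

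\textbf{The step you flagged as the main obstacle.} It is routine and needs only $\alpha<1$, not $\alpha<1/2$. The bound $|\nabla_zG_{R'}(z,\xi)|\le C_n\|z-\xi\|^{1-2n}$ also holds for the reflected part, because
\[
\Bigl\|\tfrac{\|\xi\|}{R'}z-\tfrac{R'}{\|\xi\|}\xi\Bigr\|^2-\|z-\xi\|^2=\frac{(R'^2-\|z\|^2)(R'^2-\|\xi\|^2)}{R'^2}\ge 0 .
\]
Moreover, $\int_{\mathbb{C}^n\langle r\rangle}\|z-p\|^{-\beta}\sigma(z)\le C_\beta r^{-\beta}$ uniformly in $p\in\mathbb{C}^n$ whenever $\beta<2n-1$. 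To see this, replace $p$ by its radial projection $\hat p$ onto $\mathbb{C}^n\langle r\rangle$ and use $\|z-\hat p\|\le 2\|z-p\|$. With $\beta=(2n-1)\alpha$, the divisor term is at most $C(M/r^{2n-1})^\alpha\le C\{(R/r)^{2n-1}T(R,f)/(R-r)\}^\alpha$. This term, not the Poisson term, is the source of the factor $(R/r)^{2n-1}$.

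\textbf{The constant.} $C$ depends on $f$, not only on $n$ and $\alpha$. The $O(1)$ terms from the First Main Theorem and Jensen's formula depend on $f$, and they are absorbed using $T(R,f)\ge c>0$ for $R>r_0$.
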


\begin{lem}\label{ln2.8} Let $f$ be a non-constant meromorphic function on $\mathbb{C}^n$ such that $\rho(f)\leq 1$. Then any $j\in\{1,2,\ldots,n\}$, we have
\begin{align*}
m\left(r,\frac{\partial_{z_j}(f)}{f}\right)=o(\log r)\;\;\text{as}\;\;r\to \infty.
\end{align*}
\end{lem}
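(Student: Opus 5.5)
The plan is to combine Ye's estimate (Lemma \ref{ln2.7aa}) with the concavity inequality (Lemma \ref{ln2.7a}), taking $R=2r$. Because $f$ has order at most $1$, the term $T(R,f)/(R-r)$ will then grow at most like $r^{\varepsilon}$. Since $\varepsilon>0$ is arbitrary, this gives the $o(\log r)$ bound.

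\textbf{Step 1 (reduction to a spherical mean).} Fix $j\in\{1,\ldots,n\}$ and put $h=\partial_{z_j}(f)/f$. Choose any $\alpha\in(0,\frac12)$ and use $\log^+|h|=\frac{1}{\alpha}\log^+|h|^{\alpha}$. Here $\mathbb{C}^n\langle r;\cdot\rangle$ denotes the mean over the sphere with respect to $\sigma$, so $m(r,h)=\mathbb{C}^n\langle r;\log^+|h|\rangle$. The concavity of the logarithm, in the spherical form of Lemma \ref{ln2.7a} (Jensen's inequality for the probability measure $\sigma$ on $\mathbb{C}^n\langle r\rangle$), then gives
\[
m(r,h)\le \frac{1}{\alpha}\log^+\mathbb{C}^n\big\langle r;|h|^{\alpha}\big\rangle+\frac{\log 2}{\alpha}.
\]

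\textbf{Step 2 (Ye's lemma).} Apply Lemma \ref{ln2.7aa} with $R=2r$ and $r>r_0$:
\[
\mathbb{C}^n\big\langle r;|h|^{\alpha}\big\rangle\le C\Big\{2^{2n-1}\frac{T(2r,f)}{r}\Big\}^{\alpha}.
\]
Taking $\log^+$ and dividing by $\alpha$ yields
\[
m(r,h)\le \log^+\frac{T(2r,f)}{r}+O(1),
\]
where the $O(1)$ depends only on $n$, $\alpha$ and $C$. Here we use $\log^+(xy)\le\log^+x+\log^+y$.

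\textbf{Step 3 (using $\rho(f)\le 1$).} Let $\varepsilon>0$. By the definition of order, $T(2r,f)\le (2r)^{1+\varepsilon}$ for all sufficiently large $r$. Hence
\[
\log^+\frac{T(2r,f)}{r}\le \log^+\big(2^{1+\varepsilon}r^{\varepsilon}\big)\le \varepsilon\log r+O(1).
\]
It follows that $\limsup_{r\to\infty} m(r,h)/\log r\le\varepsilon$. Since $\varepsilon$ is arbitrary and $m\ge 0$, this gives $m(r,h)=o(\log r)$. No exceptional set arises, because Ye's inequality holds for every $r_0<r<R$.

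\textbf{Main obstacle.} The delicate point is the choice of $R$. It must make the loss factor $(R/r)^{2n-1}/(R-r)$ of order $1/r$, so that it cancels the extra power of $r$ in $T(R,f)\le R^{1+\varepsilon}$. The choice $R=2r$ achieves this, whereas $R=r+1$ would only give $O(\log r)$. One also has to check that the spherical mean in Lemma \ref{ln2.7aa} is taken with respect to the same normalized measure $\sigma$ that defines $m(r,\cdot)$, so that Step 1 applies with the constant $\log 2$ (or any absolute constant).
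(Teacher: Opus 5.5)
Your proposal is correct and follows the paper's proof essentially step for step. Both combine the concavity inequality (Lemma \ref{ln2.7a}, which you rightly apply in its spherical Jensen form) with Ye's estimate (Lemma \ref{ln2.7aa}) at $R=2r$, and then use $T(2r,f)\le (2r)^{1+\varepsilon}$. The only cosmetic difference is that the paper splits into the cases $\rho<1$ and $\rho=1$, whereas you treat both at once via $T(r,f)\le r^{1+\varepsilon}$; also, your $\log^{+}\mathbb{C}^n\langle r;|h|^{\alpha}\rangle$ is the correct form of the paper's first inequality in (\ref{aa.2}), which has a stray $\log^{+}$ inside the mean.
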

\begin{proof} By the definition of the proximity function, we have
\begin{align}\label{aa.1}
 m\left(r,\frac{\partial_{z_j}(f)}{f}\right)=\mathbb{C}^n\bigg\langle r;\log^+\bigg|\frac{\partial_{z_j}(f)}{f}\bigg|\bigg\rangle,
 \end{align}
where $j\in\{1,2,\ldots,n\}$. Now using Lemmas \ref{ln2.7a} and \ref{ln2.7aa} to (\ref{aa.1}), we obtain 
\begin{align}\label{aa.2} 
m\left(r,\frac{\partial_{z_j}(f)}{f}\right)&\leq 
\frac{1}{\alpha}\log^+ \mathbb{C}^n\bigg\langle r;\log^+\bigg|\frac{\partial_{z_j}(f)}{f}\bigg|^{\alpha}\bigg\rangle+O(1)\nonumber\\
&\leq
 \log^+\left(\left(\frac{R}{r}\right)^{2n-1}\frac{T(R,f)}{R-r}\right)+O(1),
 \end{align}
where $R>r>r_0$ and $j\in\{1,2,\ldots,n\}$. Again by the given condition for every $\varepsilon>0$, there exists $r_1>0$ such that 
\begin{align}\label{aa.3} 
T(r,f)<r^{\rho+\varepsilon},\;\forall \;\; r>r_1.
\end{align}

Let us choose $R=2r$, where $r>\max\{r_0,r_1\}$. Then from (\ref{aa.2}) and (\ref{aa.3}), we get 
\begin{align}\label{aa.4} 
m\left(r,\frac{\partial_{z_j}(f)}{f}\right)&\leq
\log^+(2^{2n+\rho+\varepsilon-1}r^{\rho+\varepsilon-1})+O(1)\nonumber\\&\leq
 \log^+(r^{\rho+\varepsilon-1})+O(1),
 \end{align}
where $j\in\{1,2,\ldots,n\}$. 

If $\rho<1$, then choosing $\varepsilon>0$ in such a way that $\rho+\varepsilon-1<0$, we get from (\ref{aa.4}) that $m\left(r,\frac{\partial_{z_j}(f)}{f}\right)=O(1)$ and so
\begin{align*}
m\left(r,\frac{\partial_{z_j}(f)}{f}\right)=o(\log r)\;\;\text{as}\;\;r\to \infty,
\end{align*}
where $j\in\{1,2,\ldots,n\}$.
Next suppose that $\rho=1$. Then from (\ref{aa.4}), we get
\bea\label{aa.5} m\left(r,\frac{\partial_{z_j}f}{f}\right)&\leq & \log^+(2^{2n+\varepsilon}r^{\varepsilon})+O(1)
 \leq \log^+(r^{\varepsilon})+O(1)=\varepsilon \log r+O(1)\eea
for sufficiently large $r$. Since $\varepsilon>0$ is arbitrary, we get from (\ref{aa.5}) that
\begin{align*}
 m\left(r,\frac{\partial_{z_j}(f)}{f}\right)=o(\log r)\;\;\text{as}\;\;r\to\infty,
 \end{align*}
 for $j\in\{1,2,\ldots,n\}$.
\end{proof}

\begin{lem}\label{ln2.9} Let $a$, $b$ and $c$ be three distinct finite complex numbers and $\alpha_i$ be an entire function in $\mathbb{C}^n$ for $i=1,2,\ldots,n$. Suppose an entire function $f$ in $\mathbb{C}^n$ satisfies
the partial differential equation
\begin{align}\label{MBP1}
\frac{(\partial_{z_i}(f)-a)(\partial_{z_i}(f)-b)(\partial_{z_i}(f)-c)}{(f-a)(f-b)(f-c)}=e^{\alpha_i}
\end{align}
for $i=1,2,\ldots,n$. If atlaest one of $\alpha_1$, $\alpha_2,\ldots$, $\alpha_n$ is non-constant, say $\alpha_k$, then only one of the following cases holds
\begin{enumerate}
\item[\emph{(i)}] $f(\tilde z)=c_ke^z$, where $\tilde z=(z_1,\ldots,z_{k-1},z,z_{k+1},\ldots,z_n)$ such that $z_j=0$ for $j\neq k$ and $c_k$ is a non-zero constant in $\mathbb{C}$;
\item[\emph{(ii)}] $f(\tilde z)=c_ke^{-z}+\frac{2}{3}(a+b+c)$ and $(2a-b-c)(2b-c-a)(2c-a-b)=0$, where $\tilde z=(z_1,\ldots,z_{k-1},z, z_{k+1},\ldots,z_n)$ such that $z_j=0$ for $j\neq k$ and $c_k$ is a non-zero constant in $\mathbb{C}$;
\item[\emph{(iii)}] $f(\tilde z)=c_ke^{\frac{-1\pm i\sqrt{3}}{2}z}+\frac{3\pm i\sqrt{3}}{6}(a+b+c)$ and $a^2+b^2+c^2-ab-bc-ca=0$, where $\tilde z=(z_1,\ldots,z_{k-1},z,z_{k+1},\ldots,z_n)$ such that $z_j=0$ for $j\neq k$ and $c_k$ is a non-zero constant in $\mathbb{C}$.
\end{enumerate} 
\end{lem}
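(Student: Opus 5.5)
We reduce the statement to the one-variable Theorem B by restricting $f$ to the complex line in the $z_k$-direction. Fix $k$ with $\alpha_k$ non-constant, and set
\[g(z)=f(\tilde z),\qquad \tilde z=(0,\ldots,0,z,0,\ldots,0)\ (z \text{ in the $k$-th slot}).\]
Then $g$ is an entire function of one variable and $g'(z)=\partial_{z_k}(f)(\tilde z)$. Restricting (\ref{MBP1}) with $i=k$ to this line gives
\[\frac{(g'-a)(g'-b)(g'-c)}{(g-a)(g-b)(g-c)}=e^{\beta},\qquad \beta(z)=\alpha_k(\tilde z),\]
with $\beta$ entire in $\mathbb{C}$. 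Since the quotient is a nowhere-vanishing entire function, the zeros of the numerator and denominator coincide with multiplicities. Hence $E(S,g)=E(S,g')$ holds CM in the one-variable sense.

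The first real step is to show that $g$ is non-constant. Suppose $g\equiv$ const $=w$.
\begin{itemize}
\item If $w\in S$, the denominator vanishes identically, which is absurd.
\item If $w\notin S$, the left side equals the constant $-abc/\big((w-a)(w-b)(w-c)\big)$, forcing $\beta$ to be constant.
\end{itemize}
To exclude the second alternative, the line must be chosen so that $\beta$ is non-constant on it. The plan is to restrict instead to a parallel line $\{z_j=p_j,\ j\neq k\}$ through a point $p$ where $\partial_{z_k}\alpha_k(p)\neq0$; such a $p$ exists unless $\alpha_k$ is independent of $z_k$. Since a translate of $f$ satisfies the same equations, we may normalize $p=0$, which matches the statement's convention $z_j=0$ for $j\neq k$.

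\textbf{Main obstacle.} This is the delicate point: the hypothesis only says $\alpha_k$ is non-constant, not that $\partial_{z_k}\alpha_k\not\equiv0$. If $\alpha_k$ depends only on the other variables, I would apply the argument to the index $i$ on which $\alpha_k$ actually depends. Alternatively, I would use the equations for all $i$ simultaneously together with Lemma~\ref{ln2.7} to argue that the quotients $e^{\alpha_i}$ are constant along lines where $f$ is constant. I expect to spend most care here, possibly interpreting ``say $\alpha_k$'' as the index giving a non-constant restriction.

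With $g$ non-constant and $E(S,g)=E(S,g')$, Theorem B applies directly. It yields exactly the three alternatives:
\begin{enumerate}
\item[(i)] $g=c_ke^{z}$;
\item[(ii)] $g=c_ke^{-z}+\tfrac23(a+b+c)$ with $(2a-b-c)(2b-c-a)(2c-a-b)=0$;
\item[(iii)] $g=c_ke^{\frac{-1\pm i\sqrt3}{2}z}+\tfrac{3\pm i\sqrt3}{6}(a+b+c)$ with $a^2+b^2+c^2-ab-bc-ca=0$.
\end{enumerate}
Translating back via $g(z)=f(\tilde z)$ gives (i)–(iii) of the statement.

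It remains to check that only one case holds and that the non-constancy of $\alpha_k$ is consistent. For uniqueness, the three exponents $1$, $-1$, $\frac{-1\pm i\sqrt3}{2}$ are distinct, so the forms are mutually exclusive for a given $g$. For consistency, in each case one can compute $e^{\beta}$ explicitly. For instance, in case (i) with $g=c_ke^z$ we get $g'=g$ and the quotient is $1$. Such computations would force $\beta$ to be constant, so they serve as a sanity check on how the case distinction in the hypothesis is to be read; the statement, however, only asks for the forms of $f(\tilde z)$.
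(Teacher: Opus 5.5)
Your reduction matches the paper's Sub-case 2.1: restrict to the $z_k$-axis, read off $E(S,g)=E(S,g')$ CM from the restricted identity, and invoke Theorem B. You are right that this conclusion holds whether or not $\beta$ is constant. Once the gap below is closed, that observation would let you skip the paper's detour: order $\le 1$ via Lemmas~\ref{ln2.7} and \ref{ln2.3}, linearity of $\alpha_i$, the case $abc=0$, and Lemma 6 of Chang--Fang--Zalcman when $A_{kk}\neq 0$.

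However, the step you flag as the main obstacle, non-constancy of $g$ on the axis through the origin, is a genuine gap, and none of your remedies closes it. Normalizing $p=0$ by translation changes the function: Theorem B applied to $f(\cdot+p)$ describes $f$ on the line through $p$, not on the axis the statement is about. Passing to an index $i$ on which $\alpha_k$ depends does not work either, because the $k$-th equation restricted to a $z_i$-line involves $\partial_{z_k}f$, which is not the derivative along that line. Also, $g\equiv w\in S$ is not automatically absurd: the restricted identity $(g'-a)(g'-b)(g'-c)=e^{\beta}(g-a)(g-b)(g-c)$ then only gives $abc=0$.

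The deeper problem is that you are aiming at the wrong target. Your own closing computation shows what happens on any line where $g$ is non-constant. Theorem B gives $g'=\lambda(g-d)$ with $\lambda\in\{1,-1,\frac{-1\pm i\sqrt3}{2}\}$, so $e^{\beta}=R(g)$ for a rational function $R$ with $R(\infty)=\lambda^3$. Since $g$ omits only $d$ and $R(g)$ is entire and zero-free, $R$ has no zeros or poles off $d$, hence $R\equiv\lambda^3\in\{1,-1\}$. So a line on which $\beta$ is non-constant can never carry the forms (i)--(iii); it only yields a contradiction, which is the paper's Sub-case 2.2 ($A_{kk}\neq 0$).

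The missing idea is that the hypothesis of the lemma is inconsistent, and that is what must be proved.
\begin{enumerate}
\item[(1)] Suppose $\partial_{z_k}f\not\equiv 0$. Write $p'$ for the coordinates other than $z_k$. The set of $p'\in\mathbb{C}^{n-1}$ for which $f$ is non-constant on the line through $(p',0)$ parallel to the $z_k$-axis is open and dense, since its complement is a proper analytic subset. On each such line $e^{\alpha_k}\in\{1,-1\}$ by the argument above. So $e^{\alpha_k}$ takes values in $\{1,-1\}$ on a dense set, hence everywhere, and is constant by connectedness.
\item[(2)] Suppose $\partial_{z_k}f\equiv 0$. Then the $k$-th equation reads $-abc=e^{\alpha_k}(f-a)(f-b)(f-c)$. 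This forces $f$ either to omit $a,b,c$ or to be identically one of them, and the latter makes the quotient meaningless. In the first case $f$ is constant by Picard's theorem on complex lines, so again $\alpha_k$ is constant.
\end{enumerate}
Hence no $f$ satisfies the hypothesis, and the lemma holds vacuously. The paper's Sub-case 2.1 also applies Theorem B to $g_k$ without checking that $g_k$ is non-constant, and this argument repairs that step as well.
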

\begin{proof} Suppose that there exists an entire function $f$ in $\mathbb{C}^n$ satisfying the partial differential equation (\ref{MBP1}). Also from (\ref{MBP1}), it is easy to deduce that $E(S,f)=E\left(S,\partial_{z_i}(f)\right)$ for $i=1,2,\ldots,n$ and so $|\partial_{z_i}(f(z))|\leq \max\{|a|, |b|, |c|\}$ whenever $f(z)\in\{a, b, c\}$ for $i=1,2,\ldots,n$. Thus, by Lemma \ref{ln2.7}, the family $\{f_w(z)=f(z + w): w\in\mathbb{C}^n\}$ is normal in $\Omega\subset \mathbb{C}^n$ and so by Lemma \ref{ln2.1}, $f^{\#}(w)=\left(f_w\right)^{\#}(0)$ is uniformly bounded for all $w\in \mathbb{C}^n$. Hence, by Lemma \ref{ln2.3}, $f$ has order at most $1$. Therefore, using Lemma \ref{ln2.6} to (\ref{MBP1}), we conclude that $\alpha_i(z) =A_{i1}z_1+A_{i2}z_2+\ldots+A_{in}z_n+B_i$, where $A_{i1},A_{i2},\ldots,A_{in},B_{i}$ are constants such that $(A_{i1},A_{i2},\ldots,A_{in})\neq (0,0,\ldots,0)$.
Suppose $\alpha_k(z)=A_{k1}z_1+A_{k2}z_2+\ldots+A_{kn}z_n+B_k$ is non-constant. Then clearly $A_{k1},A_{k2},\ldots,A_{kn},B_{k}$ are constants such that $(A_{k1},A_{k2},\ldots,A_{kn})\neq (0,0,\ldots,0)$. It is easy to conclude from (\ref{MBP1}) that $f$ is a transcendental entire function.

We now consider the following two cases.

\smallskip
{\bf Case 1.} Let $abc=0$. Since $a$, $b$ and $c$ are distinct, for the sake of simplicity, we may assume that $c=0$.  It follows from (\ref{MBP1}) that
\begin{align}\label{M1}
\frac{\partial_{z_k}(f)\left(\partial_{z_k}(f)-a\right)\left(\partial_{z_k}(f)-b\right)}{f\left(f-a\right)\left(f-b\right)}=e^{A_{k1}z_1+A_{k2}z_2+\ldots+A_{kn}z_n+B_k}.
\end{align}

Again we see that
\begin{align}\label{M2}&
\frac{\partial_{z_k}(f)\left(\partial_{z_k}(f)-a\right)\left(\partial_{z_k}(f)-b\right)}{f\left(f-a\right)\left(f-b\right)}
\\&=\frac{\left(\partial_{z_k}(f)\right)^3}{f(f-a)(f-b)}-\frac{(a+b)\left(\partial_{z_k}(f)\right)^2}{f(f-a)(f-b)} +\frac{ab\left(\partial_{z_k}(f)\right)}{f(f-a)(f-b)}\nonumber\\&=
\left(\frac{\partial_{z_k}(f)}{f}\cdot \frac{\partial_{z_k}(f)}{f-a} \cdot \frac{\partial_{z_k}(f)}{f-b}\right)- \left(\frac{a+b}{a-b}\right)\left(\frac{\partial_{z_k}(f)}{f}\right)\bigg[\frac{\partial_{z_k}(f)}{f-a}-\frac{\partial_{z_k}(f)}{f-b}\bigg]\nonumber \\&+ab\bigg[\mathcal{A}_1 \frac{\partial_{z_k}(f)}{f}+\mathcal{A}_2\frac{\partial_{z_k}(f)}{f-a}+\mathcal{A}_3\frac{\partial_{z_k}(f)}{f-b}\bigg],\nonumber
\end{align}  
where $\mathcal{A}_r$ is a suitable constant for $r=1,2,3$. Now from (\ref{M1}) and (\ref{M2}), we get
\begin{align}\label{M3} 
m\left(e^{A_{k1}z_1+A_{k2}z_2+\ldots+A_{kn}z_n+B_k}\right)&=m\left(r, \frac{\partial_{z_k}(f)\left(\partial_{z_k}(f)-a\right)\left(\partial_{z_k}(f)-b\right)}{f\left(f-a\right)\left(f-b\right)}\right) \\
&\leq   \mathcal{C}_1 m\left(r,\frac{\partial_{z_k}(f)}{f}\right)+ \mathcal{C}_2 m\left(r,\frac{\partial_{z_k}(f)}{f-a}\right)+ \mathcal{C}_3 m\left(r,\frac{\partial_{z_k}(f)}{f-b}\right)+O(1),\nonumber
\end{align}
where $\mathcal{C}_1$, $\mathcal{C}_2$ and $\mathcal{C}_3$ are suitable non-zero real numbers. Using Lemma \ref{ln2.8} to (\ref{M3}), we get
\begin{align*}
O(r)=T\left(e^{A_{k1}z_1+A_{k2}z_2+\ldots+A_{kn}z_n+B_k}\right)=m\left(e^{A_{k1}z_1+A_{k2}z_2+\ldots+A_{kn}z_n+B_k}\right)=o(\log r),
\end{align*}
which is impossible.

\smallskip
{\bf Case 2.} Let $abc\neq 0$. We know that 
\begin{align*}
e^{\alpha_k(z)}=C_ke^{A_{k1}z_1+A_{k2}z_2+\ldots+A_{kn}z_n},
\end{align*}
where $(A_{k1},A_{k2},\ldots,A_{kn})\neq (0,0,\ldots,0)$ and $C_k=e^{B_k}$.

Take $\xi_k=(\xi_{k1},\xi_{k2},\ldots,\xi_{kn})\in\mathbb{C}^n\backslash \{0\}$ such that $\xi_{kk}=1$ and $\xi_{kj}=0$ for $j\neq k$. Define a holomorphic mapping $j_{\xi_k}:\mathbb{C}\to \mathbb{C}^n$ by $j_{\xi_k}(z)=z\xi_k$. Clearly $g_k=f_{\xi_k}=f\circ j_{\xi_k}:\mathbb{C}\to \mathbb{C}$ is a holomorphic mapping. Note that $g_k(z)=f(\xi_{k1} z,\xi_{k2} z,\ldots,\xi_{kn}z)$ and if we take $\hat z_k=(\hat z_{k1},\hat z_{k2},\ldots,\hat z_{kn})$, where $\hat z_{kj}=\xi_{kj}z$, $j=1,2,\ldots,n$, then we have
\begin{align}\label{M4}
g^{(1)}_k(z)=\frac{\partial g_k(z)}{\partial z}=\sum\limits_{j=1}^n \xi_{kj}\frac{\partial f(\hat z_k)}{\partial \hat z_{kj}}=\partial_{\hat z_{kk}}(f(\hat z_k)).
\end{align}

Now using (\ref{M4}) to (\ref{MBP1}), we get
\begin{align}\label{M5}
\frac{\left(g_k^{(1)}-a\right)\left(g^{(1)}_k-b\right)\left(g^{(1)}_k-c\right)}{(g_k-a)(g_k-b)(g_k-c)}=C_ke^{A_{kk}z}.
\end{align}

We consider the following two sub-cases.\par

{\bf Sub-case 2.1.} Suppose $A_{kk}=0$. Then from (\ref{M5}), we have
 \begin{align}\label{M6}
\frac{\left(g_k^{(1)}-a\right)\left(g^{(1)}_k-b\right)\left(g^{(1)}_k-c\right)}{(g_k-a)(g_k-b)(g_k-c)}=C_k.
\end{align}

It is easy to conclude from (\ref{M6}) that $E(S,g_k)=E\left(S,g_k^{(1)}\right)$. Now by Theorem B, we deduce that one of the following cases holds
\begin{enumerate}
\item[{(i)}] $f(\tilde z)=c_ke^z$, where $\tilde z=(z_1,\ldots,z_{k-1},z,z_{k+1},\ldots,z_n)$ such that $z_j=0$ for $j\neq k$ and $c_k$ is a non-zero constant in $\mathbb{C}$;
\item[{(ii)}] $f(\tilde z)=c_ke^{-z}+\frac{2}{3}(a+b+c)$ and $(2a-b-c)(2b-c-a)(2c-a-b)=0$, where $\tilde z=(z_1,\ldots, z_{k-1},z, z_{k+1},\ldots,z_n)$ such that $z_j=0$ for $j\neq k$ and $c_k$ is a non-zero constant in $\mathbb{C}$;
\item[{(iii)}] $f(\tilde z)=c_ie^{\frac{-1\pm i\sqrt{3}}{2}z}+\frac{3\pm i\sqrt{3}}{6}(a+b+c)$ and $a^2+b^2+c^2-ab-bc-ca=0$, where $\tilde z=(z_1,\ldots,z_{k-1},z,z_{k+1},\ldots,z_n)$ such that $z_j=0$ for $j\neq k$ and $c_k$ is a non-zero constant in $\mathbb{C}$.
\end{enumerate} 

{\bf Sub-case 2.2.} Suppose $A_{kk}\neq 0$. Then from (\ref{M5}), we get
\begin{align}\label{M7}
\frac{\left(g_k^{(1)}-a\right)\left(g^{(1)}_k-b\right)\left(g^{(1)}_k-c\right)}{(g_k-a)(g_k-b)(g_k-c)}=C_ke^{A_{kk}z}.
\end{align}

On the other hand we know that (see \cite[pp. 286]{Hu-Li-Yang-2003})
\begin{align}\label{M8} 
T_{g_k}(r,0)\leq \frac{1+\theta}{(1-\theta)^{2n-1}} T_f\left(\frac{r}{\theta},0\right),\quad \xi\in\mathbb{C}^n[0,1]\backslash \{0\},
\end{align}
where $0<\theta<1$. Therefore the inequality (\ref{M8}) implies that $\rho(g_k)\leq \rho(f)$ and so $\rho(g_k)\leq 1$. Let 
\begin{align}\label{M9}
h(z)=g_k\left(z/A_{kk}\right).
\end{align}

Obviously $\rho(h)\leq 1$. Now using (\ref{M9}) to (\ref{M7}), we obtain
\begin{align*}
\frac{\left(h^{(1)}-a/A_{kk}\right)\left(h^{(1)}-b/A_{kk}\right)\left(h^{(1)}-c/A_{kk}\right)}{(h-a)(h-b)(h-c)}=C_ke^{z},
\end{align*}

Finally proceeding in the same way as done in the proof of Lemma 6 \cite{Chang-Fang-Zalcman-2007}, we immediately get a contradiction.
\end{proof}

\begin{lem}\label{ln2.9a}\cite{Majumder-Sarkar-2027} Let $g(z)$ be a meromorphic function in $\mathbb{C}^n$. If $\partial^2_{z_iz_i}(g(z))\equiv 0$ for all $i=1,2,\ldots,n$, then $g(z)$ must be a polynomial in $\mathbb{C}^n$.
\end{lem}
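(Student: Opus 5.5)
The claim is that a meromorphic $g$ on $\mathbb{C}^n$ with $\partial^2_{z_iz_i}(g)\equiv 0$ for every $i$ is a polynomial. The plan is to pin down the structure of $g$ one variable at a time: first on the holomorphic locus, then extend by an identity-theorem argument, and only at the end deal with the possibility of poles.

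\emph{Step 1: local structure in one variable.} Fix $i$ and work on the complement of the polar set $P$, which is an analytic set of codimension one. Take a point $z^0\notin P$ and a small polydisc around it. There $\partial_{z_i}g$ is holomorphic and its $z_i$-derivative vanishes, so $\partial_{z_i}g$ does not depend on $z_i$. Integrating in $z_i$ gives
\begin{align*}
g(z)=z_i\,A_i(z')+B_i(z'),
\end{align*}
where $z'$ denotes the remaining variables and $A_i,B_i$ are holomorphic near the projection of $z^0$.

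\emph{Step 2: iterate over all variables.} Substitute this form into the conditions $\partial^2_{z_jz_j}(g)=0$ for $j\neq i$. Comparing coefficients of $z_i$ shows that $A_i$ and $B_i$ are both killed by $\partial^2_{z_jz_j}$. Inducting on the number of variables, $g$ agrees locally with a multi-affine polynomial
\begin{align*}
Q(z)=\sum_{J\subseteq\{1,\ldots,n\}} c_J\prod_{j\in J} z_j ,
\end{align*}
that is, a polynomial of degree at most one in each variable separately.

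\emph{Step 3: globalize.} The set $\mathbb{C}^n\setminus P$ is connected, since removing a proper analytic subset does not disconnect a domain. The coefficients $c_J$ are determined by derivatives of $g$ at a single point; for instance $c_J=\partial^J g(0)$ evaluated at a point where $g$ is holomorphic, after translating so that the point sits at the origin. Hence the same polynomial $Q$ represents $g$ on all of $\mathbb{C}^n\setminus P$, by the identity theorem applied to $g-Q$.

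\emph{Step 4: rule out poles.} This is where the real issue lies. Since $g=Q$ off $P$ and $Q$ is entire, $g-Q$ is meromorphic and vanishes on a dense open set, so $g\equiv Q$. Consequently the polar divisor is empty and $g$ is a polynomial. The one subtlety is to run Step 1 only at points where $g$ is holomorphic, rather than claiming a global decomposition in $z_i$ in the presence of poles. I expect the main effort to go into making the induction in Step 2 clean, in particular ensuring that the local coefficient functions remain holomorphic on a common polydisc.
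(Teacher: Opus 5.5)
Your proof is correct. There is no in-paper argument to compare it with: the paper quotes this lemma from Majumder--Sarkar and gives no proof, so your proposal stands as a self-contained justification by the natural route.

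The local step is sound as written. On a polydisc $\Delta$ missing the polar set $P$, the functions $A_i=\partial_{z_i}g$ and $B_i=g-z_i\,\partial_{z_i}g$ are holomorphic on all of $\Delta$. Both have zero $z_i$-derivative, so they are independent of $z_i$, because the $z_i$-slices of $\Delta$ are discs. Differentiating $z_i\,\partial^2_{z_jz_j}A_i+\partial^2_{z_jz_j}B_i=0$ with respect to $z_i$ separates the two conditions. So the ``common polydisc'' difficulty you anticipate in Step 2 does not arise.

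In Step 3, the remark that the $c_J$ are derivatives of $g$ at a point is unnecessary; after translation the expansion is in powers of $z_j-z^0_j$ anyway. Once $g=Q$ on one polydisc, the identity theorem on the connected open set $\mathbb{C}^n\setminus P$ gives $g=Q$ there. Step 4 is then immediate: a meromorphic function vanishing on a nonempty open set vanishes identically, so $g\equiv Q$ and $P=\varnothing$.

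Your argument also gives a sharper conclusion than the quoted statement: $g$ is multi-affine, i.e.\ of degree at most one in each variable separately. In the only place the paper uses the lemma (Case 1 of the proof of Lemma \ref{ln2.10}) the function is entire. There the localization and the discussion of poles are unnecessary, since the decomposition $g=z_iA_i+B_i$ holds globally on $\mathbb{C}^n$.
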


\begin{lem}\label{ln2.10} Let $a$, $b$ and $c$ be three distinct finite complex numbers and $A_i$ be non-zero constant in $\mathbb{C}$ for $i=1,2,\ldots,n$. Suppose $f$ is an entire function in $\mathbb{C}^n$ such that $0$ is a Picard exceptional value of $\partial_{z_i}(f)$ for $i=1,2,\ldots,n$ and
\begin{align}\label{M.1}
\frac{(\partial_{z_i}(f)-a)(\partial_{z_i}(f)-b)(\partial_{z_i}(f)-c)}{(f-a)(f-b)(f-c)}=A_i.
\end{align}

Then $f$ must take one of the three forms listed in Theorem \ref{t1.1}.
\end{lem}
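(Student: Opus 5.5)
Since $0$ is a Picard exceptional value of every $\partial_{z_i}(f)$, and $f$ has order at most $1$ (the normality argument of Lemma \ref{ln2.7} applied to the translates $f(z+w)$, then Lemmas \ref{ln2.1} and \ref{ln2.3}), Lemma \ref{ln2.6} gives
\[
\partial_{z_i}(f)=e^{\beta_i},\qquad \beta_i(z)=L_i(z)+d_i,
\]
where each $L_i$ is a linear form. I would first show that each $L_i$ is non-constant. If $L_k$ were constant, then $f$ would be affine in $z_k$. Fixing the other variables and applying Nevanlinna theory to (\ref{M.1}) in $z_k$, the left side of (\ref{M.1}) would tend to $0$, a contradiction unless $f$ is independent of $z_k$. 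But in that case $\partial_{z_k}(f)\equiv0$, and then (\ref{M.1}) reads $-abc=A_k(f-a)(f-b)(f-c)$, which forces $f$ to be constant.

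Next I compare the orders of growth in (\ref{M.1}). Write $\partial_{z_i}(f)^3-\sigma_1\partial_{z_i}(f)^2+\sigma_2\partial_{z_i}(f)-\sigma_3=A_i(f^3-\sigma_1f^2+\sigma_2f-\sigma_3)$, with $\sigma_j$ the elementary symmetric functions of $a,b,c$. Since $\partial_{z_i}(f)=e^{\beta_i}$ has no zeros, $f$ itself is of the form $f=Ce^{P}+Q$. Integrating in $z_i$ and using compatibility of mixed partials ($\partial_{z_j}e^{\beta_i}=\partial_{z_i}e^{\beta_j}$), I expect $\partial_{z_i}(f)=a_{1i}e^{\lambda\cdot z}$ with a common exponent. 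Then $f=Ce^{\lambda\cdot z}+D$ with $C\lambda_i=Ca_{1i}$. Substituting into (\ref{M.1}) and comparing the coefficients of $e^{3\lambda\cdot z}$, $e^{2\lambda\cdot z}$, $e^{\lambda\cdot z}$ and $1$ yields, for each $i$,
\begin{align*}
\lambda_i^3&=A_i,\\
-\sigma_1\lambda_i^2&=A_i(3D-\sigma_1),\\
\sigma_2\lambda_i&=A_i(3D^2-2\sigma_1D+\sigma_2),\\
-\sigma_3&=A_i\bigl(D^3-\sigma_1D^2+\sigma_2D-\sigma_3\bigr).
\end{align*}

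The final step is the case analysis in this algebraic system. If $\sigma_1=0$, the second equation gives $D=0$. The third gives $\sigma_2(\lambda_i-\lambda_i^3)=0$ and the fourth gives $\sigma_3(1-\lambda_i^3)=0$. When $\sigma_3\neq0$ we get $\lambda_i^3=1$, and the third equation then forces $\sigma_2=0$ unless $\lambda_i^2=1$, i.e. $\lambda_i=1$. This should yield (A)(i), where I would read the condition $ab+bc+ca=0$ off the mixed cases. When $\sigma_3=0$ we get $\sigma_2\neq0$ (the values $a,b,c$ are distinct) and $\lambda_i^2=1$, which is (A)(ii).

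If $\sigma_1\neq0$, the second and third equations show that $\lambda_i$ is a root of a fixed polynomial determined by $D$. Combined with the fourth equation, this reproduces exactly the one-variable algebra of Chang--Fang--Zalcman \cite{Chang-Fang-Zalcman-2007}. It gives $\lambda_i\in\{1,-1,\frac{-1\pm i\sqrt3}{2}\}$ together with the corresponding $D$ and the constraints on $a,b,c$. Because $D$ is common to all $i$, and each admissible $\lambda$ determines $D$ uniquely (the values $0$, $\frac23\sigma_1$ and $\frac{3\pm i\sqrt3}{6}\sigma_1$ are distinct when $\sigma_1\neq0$), all $\lambda_i$ must be equal. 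This gives $f=Ce^{\lambda t}+D$ with $t=z_1+\cdots+z_n$, i.e. case (B).

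I expect the main obstacle to be the rigorous step showing that the $\partial_{z_i}(f)$ share a single exponential $e^{\lambda\cdot z}$. Here I would apply the Borel-type uniqueness for exponential polynomials in $\mathbb{C}^n$ to the identity $e^{3\beta_i}+\cdots=A_i(f^3+\cdots)$, together with the relation $\partial_{z_j}(\partial_{z_i}f)=\partial_{z_i}(\partial_{z_j}f)$.
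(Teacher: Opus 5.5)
Your skeleton matches the paper's. Order at most $1$ and Lemma~\ref{ln2.6} give $\partial_{z_i}(f)=C_ie^{\ell_i}$ with $\ell_i=\sum_j a_{ij}z_j\not\equiv0$. One then reduces to $f=Ce^{\lambda\cdot z}+D$, and coefficient comparison gives exactly (\ref{M.10})--(\ref{M.13}) (with $L_1=-\sigma_1$, $L_2=\sigma_2$, $L_3=-\sigma_3$), after which the same algebra follows.

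\textbf{The gap.} The reduction to a single exponential is only something you say you ``expect''. The claim that zero-freeness of the partials makes $f=Ce^{P}+Q$ is either vacuous ($Q$ arbitrary) or unjustified. Compatibility of mixed partials cannot by itself produce a common exponent: it only says $C_ia_{ij}e^{\ell_i}=C_ja_{ji}e^{\ell_j}$, which is empty when $a_{ij}=a_{ji}=0$. For example, $f=e^{z_1}+e^{z_2}$ has zero-free, compatible partials $e^{z_1}$, $e^{z_2}$ with different exponents. What compatibility actually yields is a partition of $\{1,\dots,n\}$ into blocks $I_s$ with $f=\sum_s\kappa_se^{m_s}+D$, where $m_s$ involves exactly the variables $z_j$, $j\in I_s$. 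To exclude two or more blocks you must use (\ref{M.1}) again, and your proposal never does this explicitly.

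\textbf{How to close it.} The missing step is short, and it is precisely the device of the paper's Sub-case~2.1. Take $i\in I_1$ and $j\notin I_1$. The left side of (\ref{M.2a}) for this $i$ does not depend on $z_j$, so differentiating in $z_j$ gives $A_i\bigl(3f^2+2L_1f+L_2\bigr)\partial_{z_j}(f)\equiv0$. Since $\partial_{z_j}(f)$ has no zeros, $3f^2+2L_1f+L_2\equiv0$, so $f$ is constant, a contradiction. The paper phrases this differently: it assumes $a_{11}\neq0$ and integrates to $f=\frac{C_1}{a_{11}}e^{\ell_1}+\alpha_1(z_2,\dots,z_n)$. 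Zero-freeness of $\partial_{z_k}(f)$ then forces $a_{1k}=0$ whenever $\partial_{z_k}(\alpha_1)\not\equiv0$, and it differentiates (\ref{M.2a}) for the index $k$ in $z_1$. Borel's theorem applied to the cross term $e^{2m_1+m_2}$ would also work, but only once the block decomposition is in hand.

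With this step inserted, the rest of your plan goes through. Note that the first two equations already give $\lambda_i(\sigma_1-3D)=\sigma_1$, so for $\sigma_1\neq0$ all $\lambda_i$ coincide immediately, without waiting for the classification.

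Your hesitation when $\sigma_1=0\neq\sigma_3$ is well founded. If every $\lambda_i=1$, the third equation is vacuous, and $f=Ce^{t}$ is a genuine solution with $ab+bc+ca$ arbitrary. So the correct conclusion there is ``all $\lambda_i=1$, or $ab+bc+ca=0$''. The paper's Sub-case~2.2.1.1 asserts $L_2=0$ without excluding this case.
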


\begin{proof} By the first part of the proof of Lemma \ref{ln2.9}, we can prove that $f$ has order at most $1$. Since $f$ is an entire function, using Lemma \ref{ln2.4}, we get
\begin{align*}
T(r,\partial_{z_i}(f)=m(r,\partial_{z_i}(f))\leq m(r,f)+o(T(r,f))=(1+o(1))T(r,f),
\end{align*}
which shows that $\rho(\partial_{z_i}(f))\leq \rho(f)$ and so $\rho(\partial_{z_i}(f))\leq 1$ for $i=1,2,\ldots,n$. Since  $0$ is a Picard exceptional value of $\partial_{z_i}(f)$ for $i=1,2,\ldots,n$, we may take 
\begin{align}\label{M.2}
\partial_{z_i}(f)=C_ie^{a_{i1}z_1+a_{i2}z_2+\ldots+a_{in}z_n},
\end{align}
where $a_{i1},a_{i2},\ldots,a_{in}$ and $C_{i}(\neq 0)$ are constants such that $(a_{i1},a_{i2},\ldots,a_{in})\neq (0,0,\ldots,0)$.
Also from (\ref{M.1}), we have 
\begin{align}\label{M.2a}
&\left(\partial_{z_i}(f(z))\right)^3+L_1\left(\partial_{z_i}(f(z))\right)^2+L_2\left(\partial_{z_i}(f(z))\right)+L_3\\&=
A_i\left(f^3(z)+L_1f^2(z)+L_2f(z)+L_3\right),\nonumber
\end{align}
for $i=1,2,\ldots,n$, where
\begin{align}\label{M.2b}
L_1=-(a+b+c),\quad L_2=ab+bc+ca\quad \text{and}\quad L_3=-abc.
\end{align}

We consider the following two cases.\par

{\bf Case 1.} Suppose $a_{ii}=0$ for $i=1,2,\ldots,n$. Then from (\ref{M.2}), we have $\partial_{z_iz_i}^2(f)\equiv 0$ for $i=1,2,\ldots,n$. Therefore by Lemma \ref{ln2.9a}, we see that $f(z)$ is a polynomial in $\mathbb{C}^n$. Consequently $\partial_{z_i}(f)$ is also a polynomial in $\mathbb{C}^n$ for $i=1,2,\ldots,n$. Since $0$ is a Picard exceptional value of $\partial_{z_i}(f)$ for $i=1,2,\ldots,n$, we get a contradiction.\par

{\bf Case 2.} Suppose $a_{ii}\neq 0$ for atleast one $i\in\{1,2,\ldots,n\}$. For the sake of simplicity, we may assume that $a_{11}\neq 0$. Now taking integration on (\ref{M.2}), we get
\begin{align}\label{M.3}
f(z)=\frac{C_1}{a_{11}}e^{a_{11}z_1+a_{12}z_2+\ldots+a_{1n}z_n}+\alpha_1(z_2,z_3,\ldots,z_n),
\end{align}
where $\alpha_1$ is an entire function in $\mathbb{C}^{n-1}$.
Using Proposition 1.7 \cite{Hu-Li-Yang-2003} to (\ref{M.3}), we deduce that $\rho(\alpha_1)\leq 1$.
Again from (\ref{M.3}), we have 
\begin{align}\label{M.4}
\partial_{z_j}(f(z))=\frac{C_1}{a_{11}}a_{1j}^*e^{a_{11}z_1+a_{12}z_2+\ldots+a_{1n}z_n}+\partial_{z_j}(\alpha_1(z_2,z_3,\ldots,z_n)),
\end{align}
where $j=2,3,\ldots,n$ and we define
\begin{align*}
a^*_{1j}=
\begin{cases}
a_{1j}& \text{if}\; a_{1j}\neq 0\\
0& \text{if}\; a_{1j}=0.
\end{cases}
\end{align*}

We now consider the following two sub-cases.\par

{\bf Sub-case 2.1.} Suppose $\partial_{z_j}(\alpha_1(z_2,z_3,\ldots,z_n))\not\equiv 0$ for atleast one $j\in\{2,3,\ldots,n\}$. We assume that $\partial_{z_k}(\alpha_1(z_2,z_3,\ldots,z_n))\not\equiv 0$, where $k\in\{2,3,\ldots,n\}$. Since $0$ is a Picard exceptional value of $\partial_{z_k}(f(z_1,z_2,\ldots,z_n))$, from (\ref{M.4}), we conclude that $a^*_{1k}=0$ and so 
\begin{align*}
\partial_{z_k}(f(z_1,z_2,\ldots,z_n))\equiv \partial_{z_k}(\alpha_1(z_2.z_3.\ldots,z_n)).
\end{align*}

It is clear that $0$ is also a Picard exceptional value of $\partial_{z_k}(\alpha_1(z_2,z_3,\ldots,z_n))$. Now from (\ref{M.2a}), we get
\begin{align}\label{M.5}
&\left(\partial_{z_k}(\alpha_1(z_2,z_3,\ldots,z_n))\right)^3+L_1\left(\partial_{z_k}(\alpha_1(z_2,z_3,\ldots,z_n))\right)^2+L_2\left(\partial_{z_k}(\alpha_1(z_2,z_3,\ldots,z_n))\right)+L_3\nonumber\\&=
A_k\frac{C_1^3}{a_{11}^3}e^{3(a_{11}z_1+a_{12}z_2+\ldots+a_{1n}z_n)}+A_k\left(3\alpha_1(z_2,z_3,\ldots,z_n)+L_1\right)\frac{C_1^2}{a_{11}^2}e^{2(a_{11}z_1+a_{12}z_2+\ldots+a_{1n}z_n)}\nonumber\\&
+A_k\left(3\alpha^2_1(z_2,z_3,\ldots,z_n)+2L_1\alpha_1(z_2,z_3,\ldots,z_n)+L_2\right)\frac{C_1}{a_{11}}e^{a_{11}z_1+a_{12}z_2+\ldots+a_{1n}z_n}\nonumber\\&
+A_k\left(\alpha_1^3(z_2z_3,\ldots,z_n)+L_1\alpha_1^2(z_2,z_3,\ldots,z_n)+L_2\alpha_1(z_2,z_3,\ldots,z_n)+L_3\right).
\end{align}

Differentiating partially on the both sides of (\ref{M.5}) with respect to $z_1$, we obtain
\begin{align}\label{M.6}
&\frac{3C_1^2}{a_{11}^2}e^{2(a_{11}z_1+a_{12}z_2+\ldots+a_{1n}z_n)}+\left(3\alpha_1(z_2,z_3,\ldots,z_n)+L_1\right)\frac{2C_1}{a_{11}}e^{a_{11}z_1+a_{12}z_2+\ldots+a_{1n}z_n}\nonumber\\&
+\left(3\alpha^2_1(z_2,z_3,\ldots,z_n)+2L_1\alpha_1(z_2,z_3,\ldots,z_n)+L_2\right)=0.
\end{align}

Again differentiating partially with respect to $z_1$, we get from (\ref{M.6}) that
\begin{align}\label{M.7}
&\frac{3C_1}{a_{11}}e^{a_{11}z_1+a_{12}z_2+\ldots+a_{1n}z_n}=-\left(3\alpha_1(z_2,z_3,\ldots,z_n)+L_1\right)
\end{align}

Finally differentiating (\ref{M.7}) partially with respect to $z_1$, we get a contradiction.\par

{\bf Sub-case 2.2.} Suppose $\partial_{z_j}(\alpha_1(z_2,z_3,\ldots,z_n))\equiv 0$ for all $j\in\{2,3,\ldots,n\}$. Consequently it is easy to say that $\alpha_1(z_2,z_3,\ldots,z_n)$ is a constant, say $D$. Then from (\ref{M.3}), we have
\begin{align}\label{M.8}
f(z)=Ce^{a_{11}z_1+a_{12}z_2+\ldots+a_{1n}z_n}+D,
\end{align}
where $C=\frac{C_1}{a_{11}}\neq 0$.
Since $\partial_{z_i}(f)\not\equiv 0$ for $i=1,2,\ldots,n$, from (\ref{M.8}), we can conclude that $a_{1j}\neq 0$ for $j=2,3,\ldots,n$.

Now using (\ref{M.8}) to (\ref{M.2a}), we get
\begin{align}\label{M.9}
&\left(a_{1i}^3-A_i\right)C^3e^{3(a_{11}z_1+a_{12}z_2+\ldots+a_{1n}z_n)}+\left(a_{1i}^2L_1-A_i\left(3D+L_1\right)\right)C^2e^{2(a_{11}z_1+a_{12}z_2+\ldots+a_{1n}z_n)}\\&
+\big({a_{1i}}L_2-A_i\left(3D^2+2L_1D+L_2\right)\big)Ce^{a_{11}z_1+a_{12}z_2+\ldots+a_{1n}z_n}\nonumber\\&+\left(L_3-A_i\left(D^3+L_1D^2+L_2D+L_3\right)\right)=0\nonumber
\end{align}
for $i=1,2,\ldots,n$. One can easily deduce from (\ref{M.9}) that
\begin{align}
	\label{M.10}a_{1i}^3&=A_i,\\ 
\label{M.11} a_{1i}^2L_1&=A_i\left(3D+L_1\right),\\ 
\label{M.12}{a_{1i}}L_2&=A_i\left(3D^2+2L_1D+L_2\right),\\ 
\label{M.13} L_3&=A_i\left(D^3+L_1D^2+L_2D+L_3\right). 
\end{align}
Now we consider the following sub-cases.\par

\smallskip
{\bf Sub-case 2.2.1.} Suppose $L_1=0$. Then from (\ref{M.11}), we have $D=0$. Now we consider the following two sub-cases.\par

\smallskip
{\bf Sub-case 2.2.1.1.} Suppose $L_3\neq 0$. Then from (\ref{M.13}), we get $A_i=1$ for $i=1,2,\ldots,n$. Consequently from (\ref{M.10}), we obtain $a_{1i}^3=1$ for $i=1,2,\ldots,n$, also from (\ref{M.12}) we deduce $L_2=0$. In this case, from (\ref{M.8}), we have $f(z)=Ce^{a_{11}z_1+a_{12}z_2+\ldots+a_{1n}z_n}$, where $a_{11}, a_{12},\ldots,a_{1n}$ and $C$ are non-zero constants such that $a_{1i}^3=1$ for $i=1,2,\ldots,n$.\par

\smallskip
{\bf Sub-case 2.2.1.2.} Suppose $L_3=0$. Note that $a$, $b$ and $c$ are distinct complex numbers. Since $L_3=0$, from (\ref{M.2b}), we deduce that $L_2\neq 0$. Now from (\ref{M.12}), we get $a_{1i}=A_i$ for $i=1,2,\ldots,n$ and so from (\ref{M.10}), we deduce that $a_{1i}^2=1$ for $i=1,2,\ldots,n$. In this case, from (\ref{M.8}), we have $f(z)=Ce^{a_{11}z_1+a_{12}z_2+\ldots+a_{1n}z_n}$, where $a_{11}, a_{12},\ldots,a_{1n}$ and $C$ are non-zero constants such that $a_{1i}^2=1$ for $i=1,2,\ldots,n$.\par

\smallskip
{\bf Sub-case 2.2.2.} Suppose $L_1\neq 0$. Then from (\ref{M.10}) and (\ref{M.11}), we have 
\begin{align}\label{M.14}
	D=\frac{1-a_{1i}}{3a_{1i}}L_1.
\end{align}

Let us discuss the following sub-cases.\par

\smallskip
{\bf Sub-case 2.2.2.1.} Suppose $D=0$. Then from (\ref{M.14}), we deduce that $a_{1i}=1$ for all $i=1,2,\cdots,n$. Now from (\ref{M.8}), we have $f(z)=Ce^{z_1+z_2+\ldots+z_n}$.\par

\smallskip
{\bf Sub-case 2.2.2.2.} Suppose $D\neq 0$. It is easy to conclude from (\ref{M.14}) that $a_{11}=a_{12}=\ldots=a_{1n}$. Clearly from (\ref{M.14}), we have $a_{1i}\neq 1$ for all $i=1,2,\cdots,n$. Now we consider the following two sub-cases.\par

\smallskip
{\bf Sub-case 2.2.2.2.1.} Suppose $a_{1i}=-1$ for all $i=1,2,\cdots,n$. Then from (\ref{M.10}) and (\ref{M.11}), we have $A_i=-1$ for $i=1,2,\ldots,n$ and $D=-\frac{2}{3}L_1$. Here by (\ref{M.13}) we get:
\begin{align*}
	2L_1^3-9L_1L_2+27L_3=0,
\end{align*} 
from which we deduce that $(2a-b-c)(2b-c-a)(2c-a-b)=0$. Thus, from (\ref{M.8}), we see that
\begin{align*}
	f(z)=Ce^{-(z_1+z_2+\ldots+z_n)}+\frac{2}{3}(a+b+c)
\end{align*}
and $(2a-b-c)(2b-c-a)(2c-a-b)=0$.\par

\smallskip
{\bf Sub-case 2.2.2.2.2.} Suppose $a_{1i}\neq -1$ for all $i=1,2,\cdots,n$. Now from  (\ref{M.12}) and (\ref{M.14}), we have
\begin{align}\label{M.15}
	L_2=\frac{L_1^2}{3}.
\end{align}

Clearly from (\ref{M.2b}) and (\ref{M.15}), we deduce that $a^2+b^2+c^2-ab-bc-ca=0$. Again from (\ref{M.10}), (\ref{M.13}), (\ref{M.14}) and (\ref{M.15}), we get
\begin{align}\label{M.16}
	\left(1-a_{1i}^3\right)L_3=\frac{1}{27}\left(1-a_{1i}^3\right)L_1^3.
\end{align}

Now if $1-a_{1i}^3\neq 0$ for $i=1,2,\ldots,n$, then from (\ref{M.16}), we get $L_3=\frac{1}{27}L_1^3$ and so by (\ref{M.15}), we obtain $a=b=c$, which is impossible. Therefore $1-a_{1i}^3=0$ for $i=1,2,\ldots,n$, then $a_{1i}=\frac{-1\pm i\sqrt{3}}{2}$ for all $i=1, 2, \cdots, n$. Consequently, from (\ref{M.8}) and (\ref{M.14}), we obtain
\begin{align*}
	f(z)=Ce^{\left(\frac{-1\pm i\sqrt{3}}{2}\right)\left(z_1+z_2+\cdots+z_n\right)}+\frac{3\pm i\sqrt{3}}{6}(a+b+c)
\end{align*}
and $a^2+b^2+c^2-ab-bc-ca=0$.
\end{proof}

\section{\bf{Proofs of Theorem \ref{t1.1}}}
\begin{proof} Since $E(S,f)=E\left(S,\partial_{z_i}(f)\right)$ for $i=1,2,\ldots,n$, it follows that
\begin{align}\label{Eq1.1}
\frac{(\partial_{z_i}(f)-a)(\partial_{z_i}(f)-b)(\partial_{z_i}(f)-c)}{(f-a)(f-b)(f-c)}=e^{\alpha_i}
\end{align}
where $\alpha_i$ is an entire function in $\mathbb{C}^n$ for $i=1,2,\ldots,n$.

Now we consider the following two cases.\par

\smallskip
{\bf Case 1.} Suppose $\alpha_i$ is non-constant for atleast one $i$, where $i=1,2,\ldots,n$. For the sake of simplicity, we may assume that $\alpha_k$ is non-constant, where $k\in\{1,2,\ldots,n\}$. Then by Lemma \ref{ln2.9}, $f$ must take one of the three forms listed in Lemma \ref{ln2.9}.\par

\smallskip
{\bf Case 2.} Suppose $\alpha_i$ is a constant for $i=1,2,\ldots,n$. Then from (\ref{Eq1.1}), we may assume that
\begin{align}\label{Eq1.2}
(\partial_{z_i}(f)-a)(\partial_{z_i}(f)-b)(\partial_{z_i}(f)-c)=A_i(f-a)(f-b)(f-c).
\end{align}
where $A_i$ is a non-zero constant for $i=1,2,\ldots,n$. By the first part of the proof of Lemma \ref{ln2.9}, we can prove that $f$ has order at most $1$.

In this case, we consider the following two sub-cases.\par

\smallskip
{\bf Sub-case 2.1.} Suppose $0$ is a Picard exceptional value of $\partial_{z_i}(f)$ for $i=1,2,\ldots,n$. Then by Lemma \ref{ln2.10}, $f$ must take one of the three forms listed in Lemma \ref{ln2.10}.\par

\smallskip
{\bf Sub-case 2.2.} Suppose $0$ is not a Picard exceptional value of $\partial_{z_i}(f)$ for atleast one $i$, where $i=1,2,\ldots,n$. Without loss of generality, we may assume that $0$ is not a Picard exceptional value of $\partial_{z_k}(f)$, where $k\in\{1,2,\ldots,n\}$.
Then from (\ref{Eq1.2}), we get
\begin{align}\label{Eq1.3}
\left(\partial_{z_k}(f)\right)^3+L_1\left(\partial_{k}(f)\right)^2+L_2\partial_{k}(f)+L_3=A_k\left(f^3+L_1f^2+L_2f+L_3\right),
\end{align}
where the constants $L_j$ are defined as in (\ref{M.2b}). It is easy to verify from (\ref{Eq1.3}) that $f$ is a transcendental entire function in $\mathbb{C}^n$. Now differentiating partially on the both sides of (\ref{Eq1.3}) with respect to $z_k$, we get
\begin{align}\label{Eq1.4}
\left(3\left(\partial_{z_k}(f)\right)^2+2L_1\left(\partial_{k}(f)\right)+L_2\right)\partial_{z_kz_k}^2(f)=A_k\left(3f^2+2L_1f+L_2\right)\partial_{z_k}(f).
\end{align}

Clearly from (\ref{Eq1.4}), we deduce that $\partial_{z_k}(f)$ is a transcendental entire function in $\mathbb{C}^n$.

\smallskip
Let $f(z)$ and $g(z)$ be two non-constant entire functions in $\mathbb{C}^n$ and $a$ be a finite complex number in $\mathbb{C}$. Let $z_n$ be zeros of $f(z)-a$ with multiplicity $h(n)$. If $z_n$ are also zeros of $g(z)-a$ of multiplicity $h(n)$ at least, then we write $f(z)=a\mapsto g(z)=a$.

\smallskip
We consider the following sub-cases.\par

\smallskip
{\bf Sub-case 2.2.1.} Suppose $\partial_{z_k}(f)=0\mapsto \partial_{z_kz_k}^2(f)=0$. It is clear that $\partial_{z_kz_k}^2(f)/\partial_{z_k}(f)$ is an entire function in $\mathbb{C}^n$. Since $\partial_{z_k}(f)$ has order at most $1$, Lemma \ref{ln2.8} shows that $\partial_{z_kz_k}^2(f)/\partial_{z_k}(f)$ is a non-zero constant $B_k$. On integration, we get
\begin{align*}
\partial_{z_k}(f(z))=D_ke^{B_kz_k+\beta_k(z)},
\end{align*}
where $\beta_k(z)=\beta(z_1,z_2,\ldots,z_{k-1},z_{k+1},\ldots,z_n)$ is an entire function in $\mathbb{C}^{n-1}$. Now in view of Lemma \ref{ln2.6}, we may assume that
\begin{align}\label{Eq1.5}
\partial_{z_k}(f(z))=D_ke^{B_1z_1+B_2z_2+\ldots+B_{k-1}z_{k-1}+B_kz_k+B_{k+1}z_{k+1}+\ldots+B_nz_n},
\end{align}
where $B_1,\ldots,B_{k-1},B_{k+1},\ldots,B_n$ are constants in $\mathbb{C}$. Since $0$ is not a Picard exceptional value of $\partial_{z_k}(f)$, we get a contradiction from (\ref{Eq1.5}).\par

\smallskip
{\bf Sub-case 2.2.2.} Suppose $\partial_{z_k}(f)=0\not\mapsto \partial_{z_kz_k}^2(f)=0$. Let $Z\left(\partial_{z_k}(f)\right)$ be the set of zeros of $\partial_{z_k}(f)$. Then there exists atleast one $z_0\in Z\left(\partial_{z_k}(f)\right)$ such that either $\partial_{z_kz_k}^2(f(z_0))\neq 0$ or $0<\mu^0_{\partial_{z_kz_k}^2(f)}(z_0)<\mu^0_{\partial_{z_k}(f)}(z_0)$. In either case, one can easily deduce from (\ref{Eq1.4}) that $L_2=0$ and so
\begin{align}\label{Eq1.6}
\left(3\partial_{z_k}(f)+2L_1\right)\partial_{z_kz_k}^2(f)=A_k\left(3f+2L_1\right)f.
\end{align}

If possible suppose $L_1=0$. Then from (\ref{Eq1.6}), we have
\begin{align}\label{Eq1.7}
\partial_{z_k}(f)\partial_{z_kz_k}^2(f)=A_kf^2.
\end{align}

Let $z_0$ be a zero of $\partial_{z_k}(f)$ of multiplicity $p_0$. From (\ref{Eq1.7}), it is clear that $z_0$ must be a zero of $f$ of multiplicity $q_0$. Obviously $z_0$ is a zero of $\partial_{z_kz_k}^2(f)$ of multiplicity atleast $p_0-1$. If $z_0$ is a zero of $\partial_{z_kz_k}^2(f)$ of multiplicity exactly $p_0-1$, then from (\ref{Eq1.7}), we deduce that $2p_0-1=2q_0$, which is impossible. Thus, we can conclude that $\partial_{z_k}(f)=0\mapsto \partial_{z_kz_k}^2(f)=0$. Now proceeding in the same way as done in Sub-case 2.2.1, we get a contradiction. 

Hence $L_1\neq 0$. Now we consider the following sub-cases.\par

\smallskip
{\bf Sub-case 2.2.2.1.} Suppose $f=0\mapsto \partial_{z_kz_k}^2(f)=0$. It is clear that $\partial_{z_kz_k}^2(f)/f$ is an entire function in $\mathbb{C}^n$. Since $f$ and $\partial_{z_k}(f)$ has order at most $1$, Lemma \ref{ln2.8} shows that $\partial_{z_kz_k}^2(f)/f$ is a non-zero constant $D_k$. Now substituting $\partial_{z_kz_k}^2(f)=D_kf$ into (\ref{Eq1.6}), we obtain
\begin{align}\label{Eq1.8}
3\partial_{z_k}(f)+2L_1=\tilde A_k (3f+2L_1),
\end{align}
where $\tilde A_k=A_k/D_k$. Differentiating partially on the both sides of (\ref{Eq1.8}) with respect to $z_k$, we get
\begin{align*}
\partial_{z_kz_k}^2(f)=\tilde A_k \partial_{z_k}(f).
\end{align*}

Now proceeding in the same way as done in Sub-case 2.2.1, we get a contradiction.\par 

\smallskip
{\bf Sub-case 2.2.2.2.} Suppose $f=0\not\mapsto \partial_{z_kz_k}^2(f)=0$. Let $Z\left(f\right)$ be the set of zeros of $f$. Then there exists atleast one $z_1\in Z\left(f\right)$ such that $\mu^0_{\partial_{z_kz_k}^2(f)}(z_1)<\mu^0_{f}(z_1)$. In this case, one can easily deduce from (\ref{Eq1.6}) that $\mu^0_{f}(z_1)=1$ and $\mu^0_{\partial_{z_kz_k}^2(f)}(z_1)=0$. As a result from (\ref{Eq1.6}), we have
\begin{align}\label{Eq1.9}
f(z_1)=0\quad \text{and}\quad \partial_{z_k}(f(z_1))=-\frac{2L_1}{3}.
\end{align}

Let $z_2$ be a zero of $3f+2L_1$ of multiplicity $p_2$.

\medskip
First we suppose that $\partial_{z_kz_k}^2(f(z_2))\neq 0$. Thus, from (\ref{Eq1.6}), we obtain 
\begin{align}\label{Eq1.10}
f(z_2)=-\frac{2L_1}{3}\quad \text{and}\quad \partial_{z_k}(f(z_2))=-\frac{2L_1}{3}.
\end{align}

Note that $L_2=0$. Now using (\ref{Eq1.9}) and (\ref{Eq1.10}) to (\ref{Eq1.3}), we get respectively
\begin{align}\label{Eq1.11}
\frac{4}{27}L_1^3+(1-A_k)L_3=0
\end{align}
and
\begin{align}\label{Eq1.12}
(1-A_k)\left(\frac{4}{27}L_1^3+L_3\right)=0.
\end{align}

Then by a simple computation, we conclude from (\ref{Eq1.11}) and (\ref{Eq1.12}) that either $A_k=0$ or $L_1=0$, which contradicts the fact that $A_k\neq 0$ and $L_1\neq 0$.

\medskip
Next we suppose that $\partial_{z_kz_k}^2(f(z_2))=0$. In this case, from (\ref{Eq1.6}), one can easily deduce that $3f+2L_1=0\mapsto \partial_{z_kz_k}^2(f)=0$. It is clear that $\frac{\partial_{z_kz_k}^2(f)}{3f+2L_1}$ is an entire function in $\mathbb{C}^n$. Since $3f+2L_1$ and $\partial_{z_k}(f)$ has order at most $1$, Lemma \ref{ln2.8} shows that $\frac{\partial_{z_kz_k}^2(f)}{3f+2L_1}$ is a non-zero constant $\hat D_k$. Now substituting $\partial_{z_kz_k}^2(f)=\hat D_k(3f+2L_1)$ into (\ref{Eq1.6}), we obtain
\begin{align}\label{Eq1.13}
3\partial_{z_k}(f)+2L_1=\hat B_k f,
\end{align}
where $\hat B_k=A_k/\hat D_k$. Differentiating partially on the both sides of (\ref{Eq1.13}) with respect to $z_k$, we get
\begin{align*}
\partial_{z_kz_k}^2(f)=\frac{\hat B_k}{3} \partial_{z_k}(f).
\end{align*}

Now proceeding in the same way as done in Sub-case 2.2.1, we get a contradiction.

\end{proof}

\medskip

{\bf Statements and declarations:}

\smallskip
\noindent \textbf {Conflict of interest:} The author declares that there are no conflicts of interest regarding the publication of this paper.

\smallskip
\noindent{\bf Funding:} There is no funding received from any organizations for this research work.

\smallskip
\noindent \textbf {Data availability statement:}  Data sharing is not applicable to this article as no database were generated or analyzed during the current study.

\end{document}